\newtheorem{theorem}{Theorem}[section]
\newtheorem{proposition}[theorem]{Proposition}
\newtheorem{lemma}[theorem]{Lemma}
\theoremstyle{definition}
\newtheorem{definition}[theorem]{\textbf{Definition}}
\newtheorem{remark}[theorem]{\textbf{Remark}}
\renewcommand\theenumi{\@roman\c@enumi}\makeatother
\newcommand{\R}{\mathbb{R}}
\newcommand{\Hy}{\mathbb{H}^3}
\newcommand{\LH}{\mathcal{L}}
\newcommand{\M}{\mathcal{M}}
\begin{document}

\date{ }
\title{Global smooth geodesic foliations \\
of the hyperbolic space}
\author{Yamile Godoy and Marcos Salvai~\thanks{%
Partially supported by \textsc{CONICET, FONCyT, SECyT~(UNC)}.} \\
{\small {FaMAF - CIEM}, Ciudad Universitaria, 5000 C\'{o}rdoba, Argentina}\\
ygodoy@famaf.unc.edu.ar, salvai@famaf.unc.edu.ar}
\maketitle

\begin{abstract}
We consider foliations of the whole three dimensional
hyperbolic space $\mathbb{H}^{3}$ by oriented geodesics. Let $\mathcal{L}$
be the space of all the oriented geodesics of $\mathbb{H}^{3}$, which is a
four dimensional manifold carrying two canonical pseudo-Riemannian metrics
of signature $\left( 2,2\right) $. We characterize, in terms of these
geometries of $\mathcal{L}$, the subsets $\mathcal{M}$ in $\mathcal{L}$ that
determine foliations of $\mathbb{H}^{3}$. We describe in a similar
way some distinguished types of geodesic foliations of $\mathbb{H}^{3}$,
regarding to which extent they are in some sense trivial in some directions:
On the one hand, foliations whose leaves do not lie in a totally geodesic
surface, not even at the infinitesimal level. On the other hand, those for
which the forward and backward Gauss maps $\varphi ^{\pm }:\mathcal{M}\rightarrow \mathbb{H}%
^{3}\left( \infty \right) $ are local diffeomorphisms. Besides, we prove
that for this kind of foliations, $\varphi ^{\pm }$ are global
diffeomorphisms onto their images.

The subject of this article is within the framework of foliations by
congruent submanifolds, and follows the spirit of the paper by Gluck and
Warner where they understand the infinite dimensional manifold of all the
great circle foliations of the three sphere.
\end{abstract}

\noindent \textsl{Key words and phrases:} geodesic foliation, hyperbolic space, space of oriented lines
\noindent \textsl{Mathematics Subject Classification:}  53C12, 53C40, 53C50.

\section{Introduction}

\subsection{Geodesic foliations}

A smooth geodesic foliation of a Riemannian manifold $N$ is given by a
smooth unit vector field $V$ on $N$ all of whose integral curves, the
leaves, are geodesics. Throughout the paper, smooth means of class $\text{C}^\infty$. 

The standard examples of geodesic foliations of $\mathbb{R}^3$ are given by
foliating the space by parallel planes which are in turn foliated by
parallel lines, with smoothly varying directions. One can construct other
examples by writing $\mathbb{R}^{3}$ smoothly as the disjoint union of the $z
$-axis and one-sheet hyperboloids of revolution around that axis, and
considering on each one, coherently, one of the two ways of ruling it (the
striction circles of the hyperboloids do not need to be at the same height).
Applying a linear isomorphism one obtains new examples.

Notice that the foliations of the hyperbolic three space by totally geodesic
surfaces, as well as the foliations of the hyperbolic plane by geodesics,
are by far not as rigid as in the Euclidean case \cite{Ferus-1973, Browne-1984}. So, 
the hyperbolic analogues of the standard examples of Euclidean geodesic foliations are
richer. Recently, Nuchi studied the fiberwise homogeneous geodesic foliations of the three dimensional space forms \cite{Nuchi}.

Global smooth geodesic foliations of the three dimensional Euclidean space
were characterized in \cite{Salvai-2009} in terms of the geometry of the
space of oriented lines. Now, we deal with the analogous problem in the
hyperbolic context. The general basic theory for the Euclidean case is still
useful, but some crucial definitions and arguments in the proofs must be
adapted to the hyperbolic setting.

Let $\mathbb{H}^{3}$ be the three dimensional hyperbolic space of constant
sectional curvature $-1$. Let $\mathcal{L}_{0}$ and $\mathcal{L}_{-}$ be the
spaces of oriented geodesics of $\mathbb{R}^{3}$ and $\mathbb{H}^{3}$,
respectively, which are manifolds of dimension four admitting canonical
neutral pseudo-Riemannian metrics:\ $\mathcal{L}_{0}$ admits one (associated
with the cross product) \cite{Guilfoyle-2005, Salvai-2005}, and $\mathcal{L}_{-}$ 
admits two of them, $g_{\times }$ and $g_{K}$, coming from the cross
product and the Killing form on Iso$\,\left( \mathbb{H}^{3}\right) $,
respectively \cite{Salvai-2007, Georgiou-2010}. See the precise definitions
below in the preliminaries. Distinguished geometries on spaces of oriented
geodesics are also studied in \cite{Alekseevsky-2011} and \cite{Anciaux-2012}.

While the geodesic foliations of $\mathbb{R}^{3}$ are described in terms of
the canonical neutral metric on $\mathcal{L}_{0}$, the characterization of
the geodesic foliations of $\mathbb{H}^{3}$ involves both $g_{\times }$ and 
$g_{K}$. This situation appears also in other problems in hyperbolic
geometry; for instance, A.\ Honda needed both canonical neutral metrics on 
$\mathcal{L}_{-}$ in the study of the isometric immersions of the hyperbolic
plane into $\mathbb{H}^{3}$ \cite{Honda-2012}.

We will have two types of distinguished foliations. We call a geodesic
foliation $\emph{nondegenerate}$ if the leaves do not lie in a totally
geodesic surface, not even at the infinitesimal level. More precisely, if
the only eigenvectors of $\nabla V$ are in $\mathbb{R}V$, where $V$ is the
unit vector field that determines the foliation.

We have also another notion, which turns out to be weaker: a 
\emph{semi-nondegenerate} foliation does not resemble, in any direction, a trivial
foliation whose leaves are all orthogonal to a fixed horosphere. In the
upper half space model of $\mathbb{H}^{3}$, these are foliations congruent
to the those with vertical geodesics, with both orientations. See Definition
\ref{nondeg}. Both concepts generalize the Euclidean notion of nondegeneracy
(see Corollary 4 in \cite{Salvai-2009}). For a higher dimensional (local) analogue, see the foliations 
of  $\R^{n}$ by pairwise skew $p$-planes in \cite{Tabachnikov-2013}.

We want to emphasize that the statements of the results are similar to those
of \cite{Salvai-2009}, but the technical meaning of the definitions involved, 
for instance, (almost) semidefinite submanifolds and (semi-)nondegenerate foliations are quite
different in the Euclidean and the hyperbolic cases.

\subsection{Foliations by congruent submanifolds}

The general setting of this article is the study of foliations of a smooth
manifold $N$ by congruent submanifolds: Suppose that the Lie group $G$ acts
on $N$, let $M$ be a closed submanifold of $N$, and let $\mathcal{C}$ be the
set of all submanifolds of $N$ congruent to $M$ via $G$. Let $H$ be the
subset of all points in $G$ that preserve $M$. Then $H$ is a closed Lie subgroup of 
$G$ since $M$ is closed in $N$ and we can identify $\mathcal{C}\cong G/H$.
Sometimes $\mathcal{C}$ admits distinguished $G$-invariant geometries, which
are useful to describe the foliations of $N$ by submanifolds congruent to $M$. 
More precisely, the problem is the following: 
\begin{center}
Describe geometrically which subsets $\mathcal{M}$ of $\mathcal{C}$
determine foliations of $N$.
\end{center}

The paradigm is the paper \cite{Gluck-1983}, where foliations of $S^{3}$ by
great circles are characterized in this way. See also \cite{Salvai-2002} (a
partial generalization of \cite{Gluck-1983}) and \cite{Salvai-2009}, with
the global foliations of $\mathbb{R}^{3}$ by oriented lines, which includes
a pseudo-Riemannian reformulation of the principal result of \cite%
{Gluck-1983}. 
M. Czarnecki and R. Langevin are currently working, in this
context, on the classification of codimension two totally geodesic
foliations of the complex hyperbolic space. 

\section{Preliminaries}
A smooth geodesic foliation of $\mathbb{H}^{3}$ is given by a smooth unit
vector field $V$ on $\mathbb{H}^{3}$ all of whose integral curves, the
leaves, are geodesics. The set $\mathcal{M}$ of all the leaves admits a
canonical differentiable structure. For the sake of completeness, we include
its existence as a proposition.

\begin{proposition}\label{est.dif}
The set $\mathcal{M}$ of all the leaves of a geodesic foliation of $\mathbb{%
H}^{3}$ admits a unique differentiable structure such that the canonical
projection $P:\mathbb{H}^{3}\rightarrow \mathcal{M}$ is a smooth submersion.
\end{proposition}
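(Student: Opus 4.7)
The plan is to realize $\mathcal{M}$ as an immersed submanifold of $\mathcal{L}_{-}$ via the natural smooth map $\Phi:\mathbb{H}^{3}\to\mathcal{L}_{-}$ sending $p$ to the maximal oriented geodesic with initial conditions $(p,V(p))$. This factors as $p\mapsto(p,V(p))\in T^{1}\mathbb{H}^{3}$ followed by the smooth quotient projection $T^{1}\mathbb{H}^{3}\to\mathcal{L}_{-}$ by the geodesic flow, so $\Phi$ is smooth. Its fibers are precisely the leaves of the foliation; hence, set-theoretically, $\mathcal{M}=\Phi(\mathbb{H}^{3})$ and the canonical projection factors as $\Phi=\iota\circ P$ for a tautological inclusion $\iota:\mathcal{M}\hookrightarrow\mathcal{L}_{-}$.

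Next I would build charts from local transversals. For each $p\in\mathbb{H}^{3}$ I fix an embedded $2$-disk $T_{p}\subset\mathbb{H}^{3}$ through $p$ orthogonal to $V(p)$. By the smooth dependence on initial conditions for the ODE $\dot\gamma=V(\gamma)$, there is a flowbox neighborhood $U_{p}\cong T_{p}\times(-\varepsilon,\varepsilon)$ in which the slices $\{q\}\times(-\varepsilon,\varepsilon)$ are arcs of leaves. A short computation using the horizontal/vertical splitting of $T\,T^{1}\mathbb{H}^{3}$ induced by the Levi-Civita connection, together with $\nabla_{V}V=0$ (because the leaves are geodesics), gives $\ker d\Phi_{p}=\mathbb{R}\,V(p)$. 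Therefore $\Phi|_{T_{p}}$ is an immersion at $p$; after possibly shrinking $T_{p}$ it is even a topological embedding, via the following injectivity argument: if two distinct points $q_1,q_2\in T_{p}$ lay on a common leaf, then along it their arc-length separation would be bounded away from zero by transversality (as $T_{p}\to\{p\}$), while for a sequence of such pairs with $q_1,q_2\to p$ the initial directions converge to $V(p)$ and geodesics of $\mathbb{H}^{3}$ through points near $p$ with directions near $V(p)$ do not return to $p$. I then use the bijections $\sigma_{p}:T_{p}\to\Phi(T_{p})\subset\mathcal{M}$ as charts; transition maps $\sigma_{q}^{-1}\circ\sigma_{p}$ amount to flowing along leaves from $T_{p}$ to $T_{q}$, and are smooth by the flowbox description.

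Finally, the charts $\{\sigma_{p}\}$ define a smooth atlas of dimension $2$ on $\mathcal{M}$. Read in the flowbox coordinates $U_{p}\cong T_{p}\times(-\varepsilon,\varepsilon)$ and in the chart $\sigma_{p}^{-1}$, the projection $P$ is just the projection onto the first factor, hence a smooth submersion; second countability is inherited from $\mathbb{H}^{3}$ via a countable subcover. Uniqueness follows from the universal property of quotients by smooth submersions: any second such structure would make the identity on $\mathcal{M}$ smooth in both directions. The main technical obstacle is the Hausdorff property of $\mathcal{M}$: locally it reduces to the injectivity of $\sigma_{p}$ sketched above, and globally I would exploit that $\iota:\mathcal{M}\to\mathcal{L}_{-}$ is locally an embedding into the Hausdorff manifold $\mathcal{L}_{-}$, so any two distinct leaves $L_{1}\neq L_{2}$, being distinct points of $\mathcal{L}_{-}$, have disjoint neighborhoods whose intersections with $\mathcal{M}$ furnish disjoint open neighborhoods in $\mathcal{M}$.
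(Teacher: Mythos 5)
Your construction is correct, but it takes a genuinely different route from the paper. The paper does not build the atlas by hand: it invokes Palais's Theorem VIII on regular distributions, which delivers existence, uniqueness, the Hausdorff property and the submersion property all at once, and thereby reduces the whole proposition to exhibiting, around each point $p$, a cubical flowbox that every leaf meets in at most one slice. The geometric input there is the choice of a \emph{totally geodesic} transversal: a geodesic of $\mathbb{H}^3$ transverse to a complete totally geodesic surface meets it at most once (convexity of $\mathbb{H}^2\subset\mathbb{H}^3$), so the one-slice condition holds with no shrinking or limiting argument. Your version replaces that citation with an explicit atlas of transversal charts, a holonomy argument for the transition maps, and a separate treatment of Hausdorffness; that last step --- factoring the leaf space continuously and injectively through $\mathcal{L}$ and pulling back disjoint neighborhoods --- is a nice touch that anticipates the role $\mathcal{L}$ plays later (Theorem \ref{adefinite} upgrades the map $\iota$ to a closed submanifold inclusion). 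The one place you should tighten is the injectivity of $\sigma_p$: the clause ``geodesics through points near $p$ with directions near $V(p)$ do not return to $p$'' is not quite the relevant statement, since no geodesic of $\mathbb{H}^3$ returns to \emph{any} point; what you need is that a leaf cannot return to the \emph{transversal} at a nearby point. The clean way to get this is that geodesics of $\mathbb{H}^3$ are globally minimizing, so the arc-length separation $\geq\varepsilon$ forced by the flowbox equals the distance $d(q_1,q_2)$, contradicting $q_1,q_2\to p$ once $T_p$ has diameter less than $\varepsilon$. Alternatively, adopt the paper's totally geodesic transversal and the issue disappears entirely.
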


\begin{proof}
Let $V$ be the smooth unit vector field on $\mathbb{H}^{3}$ associated with
the geodesic foliation and consider the smooth distribution given by $%
\mathcal{D}=\mathbb{R}V$. By Theorem VIII in \cite{Palais-1957}, it suffices
to prove that $\mathcal{D}$ is regular, that is, if for each $p\in \mathbb{H}%
^{3}$ there is a cubical coordinate system $(U,(x_{1},x_{2},x_{3}))$
centered at $p$ such that $\left\{ \left. {(\partial /\partial x_{3}}%
)\right\vert _{q}\right\} $ is a basis of $\mathcal{D}_{q}$ for all $q\in U$
and each leaf of $\mathcal{D}$ intersects $U$ in at most one 1-dimensional
slice $\left( x_{1},x_{2}\right) =$ const.

Let us see that for each $p\in \mathbb{H}^{3}$ we have such a coordinate
system. Let $u_{1},u_{2}\in T_{p}\mathbb{H}^{3}$ such that $\left\{
u_{1},u_{2},V(p)\right\} $ is an orthonormal basis of $T_{p}\mathbb{H}^{3}$
and let $F:\mathbb{R}^{2}\rightarrow \mathbb{H}^{3}$ be the totally geodesic
submanifold given by $F\left( x,y\right) =$ Exp$_{p}(xu_{1}+yu_{2})$ (here $%
\text{Exp}$ is the geodesic exponential map). We consider the smooth map 
\begin{equation*}
\alpha :\mathbb{R}^{3}\rightarrow \mathbb{H}^{3},\hspace{0.5cm}\alpha
(x,y,t)=\vartheta _{t}(F\left( x,y\right) )\text{,}
\end{equation*}%
where $\vartheta _{t}$ is the flow of $V$. Since $d\alpha _{0}$ is an
isomorphism, there exist $\varepsilon >0$ and an open neighborhood $U\subset 
\mathbb{H}^{3}$ of $p$ such that ${\alpha }:(-\varepsilon ,\varepsilon
)^{3}\rightarrow U$ is a diffeomorphism. Hence, $\left( U,\alpha
^{-1}=(x_{1},x_{2},x_{3})\right) $ is a cubical coordinate system centered
at $p$ such that $\left. {(\partial /\partial x_{3}})\right\vert _{q}=V(q)$
for all $q\in U$. The 1-dimensional slices are clearly integral submanifolds
od $\mathcal{D}$ and no leaf of $\mathcal{D}$ intersects two different
slices of $U$, since geodesics in $\mathbb{H}^{3}$ transverse to a totally
geodesic surface intersect it at most at one point.
\end{proof}

\medskip

The space $\mathcal{L}$ of all complete oriented geodesics of $\mathbb{H}^{3}
$ (up to orientation preserving reparametrizations) admits a unique
differentiable structure such that the canonical projection $\Pi :T^{1}%
\mathbb{H}^{3}\rightarrow \mathcal{L}$ is a differentiable submersion (by 
\cite{Palais-1957}, as above, with the spray as the vector field giving the
foliation). We may think of $c\in \mathcal{L}$ as the equivalence class of
unit speed geodesics $\gamma :\mathbb{R}\rightarrow \mathbb{H}^{3}$ with
image $c$ such that $\{\dot{\gamma}(t)\}$ is a positive basis of $T_{\gamma
(t)}c$ for all $t$. If $\ell \in \mathcal{L}$, then by abuse of notation we
sometimes write $z\in \ell $, meaning that $z$ is in the underlying line.

Fixing a point $o\in \mathbb{H}^{3}$, let 
\begin{equation}
H:T(T_{o}^{1}\mathbb{H}^{3})\rightarrow \mathcal{L}  \label{H}
\end{equation}%
be the map defined as follows: Let $u\in T_{o}^{1}\mathbb{H}^{3}$ and $v\in
T_{o}\mathbb{H}^{3}$ with $u\bot v$, then $H(u,v)$ is the oriented geodesic
with initial point $\text{Exp}_{o}(v)$ and initial velocity the parallel
transport of $u$ along the geodesic $t\mapsto \text{Exp}_{o}(tv)$ at $t=1$. 
Proposition 4.14 of \cite{Beem-1996} asserts that $H$ is a diffeomorphism.

Let $\gamma $ be a complete unit speed geodesic of $\mathbb{H}^{3}$ and let $%
\mathcal{J}_{\gamma }$ be the space of all Jacobi vector fields along $%
\gamma $ which are orthogonal to $\dot{\gamma}$. There exists a well-defined
canonical isomorphism 
\begin{equation}
T_{\gamma }:\mathcal{J}_{\gamma }\rightarrow T_{[\gamma ]}\mathcal{\mathcal{L%
}}\text{,}\hspace{1cm}T_{\gamma }(J)=\left. {\frac{d}{dt}}\right\vert
_{0}[\gamma _{t}]\text{,}  \label{isoT}
\end{equation}%
where $\gamma _{t}$ is any variation of $\gamma $ by unit speed geodesics
associated with $J$ (see \cite{Salvai-2007}).

Given a tangent vector $X$ to a pseudo-Riemannian manifold, we denote $%
\left\Vert X\right\Vert =\left\langle X,X\right\rangle $, the square norm
associated with the metric $\left\langle .\,,.\right\rangle $, and $\left\vert
X\right\vert =\sqrt{\left\vert \left\langle X,X\right\rangle \right\vert }$.
Also, given $v\in T\mathbb{H}^{3}$, we denote by $\gamma _{v}$ the unique
geodesic in $\mathbb{H}^{3}$ with initial velocity $v$.

Now, we recall the definition of the two canonical pseudo-Riemannian metrics 
$g_{\times }$ and $g_{K}$ on $\mathcal{\mathcal{L}}$ given in \cite[Theorem 1%
]{Salvai-2007}. In terms of the isomorphism (\ref{isoT}) the square norms of
these metrics may be written as follows \cite[page 362]{Salvai-2007}: For $%
J\in \mathcal{J}_{\gamma }$, 
\begin{equation}
\begin{array}{lll}
\Vert T_{\gamma }(J)\Vert _{\times } & = & \langle \dot{\gamma}\times
J,J^{\prime }\rangle \text{,} \\ 
\Vert T_{\gamma }(J)\Vert _{K} & = & \left\vert J\right\vert
^{2}-\left\vert J^{\prime }\right\vert ^{2}\text{.}%
\end{array}
\label{n}
\end{equation}%
The cross product $\times $ is induced by a fixed orientation of $\mathbb{H}%
^{3}$ and $J^{\prime }$ denotes the covariant derivative of $J$ along $%
\gamma $ (the right hand side in the expressions are constant
functions, so they are well defined).

Let $\mathcal{M}$ be a submanifold of $\mathcal{L}$ and we take $[\gamma]\in \M$. 
Next we show that any tangent vector in $T_{[\gamma ]}\mathcal{M}$ corresponds 
(via $T_{\gamma }$) to a Jacobi vector field in $\mathcal{J}_{\gamma }$ associated with a
variation of $\gamma$ by unit speed geodesics whose equivalence classes are
in $\mathcal{M}$. In fact, given $X\in T_{[\gamma ]}\mathcal{M}$, there exists a smooth
curve $c:(-\varepsilon ,\varepsilon )\rightarrow \mathcal{M}$ with 
$c(0)=[\gamma ]$ and $\dot{c}(0)=X$. By Proposition 3 in \cite{Salvai-2007},
there exists a standard presentation of $c$, that is a function $\varphi :\mathbb{R}\times (-\varepsilon ,\varepsilon )
\rightarrow \mathbb{H}^{3}$ such that $s\mapsto \alpha _{t}(s):=\varphi (s,t)$ is a unit speed geodesic of 
$\mathbb{H}^{3}$ satisfying $c(t)=[\alpha _{t}]$, $\left\langle \dot{\beta}
(t),\dot{\alpha _{t}}(0)\right\rangle =0$ for all $t\in (-\varepsilon ,\varepsilon )$, where $\beta
(t)=\varphi (0,t)$, and $\varphi(0,0)=\gamma (0)$. It is easy to see that 
\begin{equation*}
J(s)=\left. {\frac{d}{dt}}\right\vert _{0}\alpha _{t}(s)
\end{equation*}%
is a Jacobi field in $\mathcal{J}_{\gamma }$ and it satisfies 
\begin{equation*}
T_{\gamma }(J)=\left. {\frac{d}{dt}}\right\vert _{0}[\alpha _{t}]=\left. {%
\frac{d}{dt}}\right\vert _{0}c(t)=X\text{.}
\end{equation*}%

\section{Global geodesic foliations of $\mathbb{H}^{3}$}

In this section we characterize, in terms of the canonical neutral metrics
on $\mathcal{L}$, the subsets $\mathcal{M}$ in $\mathcal{L}$ that determine
foliations of $\mathbb{H}^{3}$. To this end, it is convenient to give the
following definition.

\begin{definition}
A submanifold $\mathcal{M}$ of $\mathcal{L}$ is said to
be \emph{almost semidefinite} if $\Vert X\Vert _{\times }=0$ for $X\in T\mathcal{%
M}$ only if $\Vert X\Vert _{K}\geq 0$.
\end{definition}

\begin{theorem}
\label{adefinite} Let $\mathcal{M}$ be a surface contained in $\mathcal{L}$ 
\emph{(}the inclusion is a priori not even smooth\emph{)}. Then the
following statements are equivalent:

\begin{enumerate}
\item The surface $\mathcal{M}$ is the space of leaves of a smooth foliation
of $\mathbb{H}^{3}$ by oriented geodesics, with the canonical differentiable
structure.

\item The surface $\mathcal{M}$ is a closed almost semidefinite connected submanifold of $\mathcal{L}$.
\end{enumerate}
Besides, if $\M$ satisfies \emph{(a)} or \emph{(b)}, $\M$ is diffeomorphic to $\R^2$.
\end{theorem}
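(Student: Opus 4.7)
The key to both implications is an explicit description of normal Jacobi fields on $\Hy$. On a unit-speed geodesic $\gamma$, fix a parallel orthonormal frame $\{e_{1},e_{2}\}$ with $e_{i}\perp\dot\gamma$ and write $J=f_{1}e_{1}+f_{2}e_{2}\in\mathcal{J}_{\gamma}$; the Jacobi equation $J''=J$ yields $f_{i}(s)=a_{i}\cosh s+b_{i}\sinh s$, and a short computation from \eqref{n} gives
\[
\Vert T_{\gamma}(J)\Vert_{\times}=a_{1}b_{2}-a_{2}b_{1},\qquad \Vert T_{\gamma}(J)\Vert_{K}=(a_{1}^{2}+a_{2}^{2})-(b_{1}^{2}+b_{2}^{2}).
\]
From this I would extract the central dichotomy: if $\Vert T_{\gamma}(J)\Vert_{\times}\neq 0$, the pairs $(a_{i},b_{i})$ are linearly independent and $J$ has no zeros; if $\Vert T_{\gamma}(J)\Vert_{\times}=0$, then $J=u\,(a\cosh s+b\sinh s)$ for some constant vector $u\perp\dot\gamma$, and $J$ has a zero iff $a^{2}<b^{2}$, iff $\Vert T_{\gamma}(J)\Vert_{K}<0$. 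So $J$ vanishes somewhere exactly when $\Vert T_{\gamma}(J)\Vert_{\times}=0$ and $\Vert T_{\gamma}(J)\Vert_{K}<0$.

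For (a)$\Rightarrow$(b), Proposition~\ref{est.dif} supplies the differentiable structure on $\mathcal{M}$. The inclusion $\iota:\mathcal{M}\hookrightarrow\mathcal{L}$ is smooth because it factors locally as $\Pi\circ V\circ\sigma$ for a local smooth section $\sigma$ of $P$, and closedness of $\iota(\mathcal{M})$ in $\mathcal{L}$ is immediate: if $[\gamma_{n}]\in\mathcal{M}$ converges to $[\gamma]$, pick $p_{n}\in\gamma_{n}$ converging to $p\in\gamma$; then $V(p_{n})$, the tangent to $\gamma_{n}$ at $p_{n}$, tends to $V(p)$, so $\gamma$ is the leaf through $p$ and $[\gamma]\in\mathcal{M}$. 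Connectedness follows from $\mathcal{M}=P(\Hy)$. For almost semidefiniteness, given $X\in T_{[\gamma]}\mathcal{M}$ take a standard presentation $\alpha_{t}$ with Jacobi field $J(s)=\partial_{t}|_{0}\alpha_{t}(s)$ and $T_{\gamma}(J)=X$. Since $P\circ\alpha_{t}\equiv[\alpha_{t}]$ depends only on $t$, differentiating yields $dP_{\gamma(s)}(J(s))=X$ for every $s$; as $\ker dP_{\gamma(s)}=\R\dot\gamma(s)$ and $J\perp\dot\gamma$, any zero of $J$ forces $X=0$. Hence for $X\neq 0$ the field $J$ has no zeros, and the dichotomy gives $\Vert X\Vert_{\times}=0\Rightarrow\Vert X\Vert_{K}\geq 0$; this also shows $d\iota$ is injective, so $\iota$ is a smooth embedding.

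For (b)$\Rightarrow$(a), I would introduce the three-dimensional incidence manifold $F_{\mathcal{M}}=\Pi^{-1}(\mathcal{M})\subset T^{1}\Hy$ and the projection $\pi:F_{\mathcal{M}}\to\Hy$ sending a unit vector to its base point. Unravelling tangent vectors via standard presentations, a nonzero element of $\ker d\pi$ at a unit vector based at $p=\gamma(s_{0})$ corresponds to a nonzero $X\in T_{[\gamma]}\mathcal{M}$ whose Jacobi field $T_{\gamma}^{-1}(X)$ vanishes at $s_{0}$; by the dichotomy this would require $\Vert X\Vert_{\times}=0$ and $\Vert X\Vert_{K}<0$, contradicting almost semidefiniteness. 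Hence $\pi$ is a local diffeomorphism. Properness: $\{[\gamma]\in\mathcal{L}:\gamma\cap K\neq\emptyset\}$ is the continuous image under $\Pi$ of the compact $T^{1}\Hy|_{K}$, hence compact, and intersecting with the closed $\mathcal{M}$ yields a compact set containing the image of $\pi^{-1}(K)$. Since $F_{\mathcal{M}}$ is connected (as an $\R$-bundle over the connected $\mathcal{M}$) and $\Hy$ is simply connected, the proper local diffeomorphism $\pi$ is in fact a diffeomorphism. The map $(p,[\gamma])\mapsto\dot\gamma(s_{0})$ composed with $\pi^{-1}$ is the desired smooth unit vector field whose integral curves are precisely the elements of $\mathcal{M}$.

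Finally, to see $\mathcal{M}\cong\R^{2}$, note that the flow $\vartheta_{t}$ of $V$ is a smooth free $\R$-action on $\Hy$; it is proper because any unit-speed curve stays in a compact $K$ for time at most $\mathrm{diam}(K)$, so $\{t:\vartheta_{t}(K)\cap K\neq\emptyset\}$ is bounded. Hence $\Hy\to\mathcal{M}$ is a principal $\R$-bundle, trivial because $\R$ is contractible, so $\Hy\cong\mathcal{M}\times\R$; in particular $\mathcal{M}$ is a contractible second-countable smooth surface, hence diffeomorphic to $\R^{2}$. The main technical subtlety I anticipate is the faithful identification of $T_{[\gamma]}\mathcal{M}$ with Jacobi fields coming from variations \emph{through leaves of $\mathcal{M}$}: this is what translates the global disjointness of leaves into the pointwise algebraic inequality of almost semidefiniteness and drives both implications.
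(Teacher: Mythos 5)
Your proposal is correct, and while your direction (a)$\Rightarrow$(b) essentially coincides with the paper's argument (the observation that $dP_{\gamma(s)}(J(s))=X$ for all $s$, so that $J$ cannot vanish unless $X=0$, is the same nonvanishing statement the paper extracts from the identity $J'=\nabla_J V$ in (\ref{Jprima})), your direction (b)$\Rightarrow$(a) takes a genuinely different route. The paper studies the squared distance $D|_{\mathcal M}$ to a fixed point $o$ (Lemma \ref{distancia}): every critical point is a strict local minimum with value $0$ and $D$ is proper on $\mathcal M$, so there is exactly one critical point for each choice of $o$; varying $o$ this yields simultaneously that the geodesics of $\mathcal M$ cover $\Hy$, that they are pairwise disjoint, and that $\mathcal M$ is diffeomorphic to $\R^2$, with smoothness of $V$ checked separately at the end. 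You instead pass to the incidence manifold $F_{\mathcal M}=\Pi^{-1}(\mathcal M)$ and show that the footpoint projection $\pi:F_{\mathcal M}\to\Hy$ is a proper local diffeomorphism from a connected space onto the simply connected $\Hy$, hence a diffeomorphism; this packages existence, uniqueness and smoothness of $V$ into a single covering-space statement, with your explicit dichotomy ($J$ has a zero iff $\Vert T_\gamma(J)\Vert_\times=0$ and $\Vert T_\gamma(J)\Vert_K<0$) doing the work of both Lemma \ref{distancia}(a) and the paper's kernel computation via the connection operator. The trade-off is that your route does not produce $\mathcal M\cong\R^2$ along the way; you recover it from the free proper $\R$-action of the flow and triviality of principal $\R$-bundles, which is fine but ultimately rests on the classification of contractible surfaces rather than on the Morse-theoretic picture the paper already has in hand. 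Two small points to tighten: properness of $\pi$ is most directly seen from $\pi^{-1}(K)=p^{-1}(K)\cap\Pi^{-1}(\mathcal M)$, a closed subset of the compact $p^{-1}(K)$ (as phrased, your argument only bounds the image of $\pi^{-1}(K)$ in $\mathcal M$); and in identifying $\ker d\pi$ with vanishing Jacobi fields you should note, as the paper does in its smoothness argument, that a variation through unit vectors whose footpoint curve has zero velocity produces a Jacobi field with $J(0)=0$ and $J'(0)\perp\dot\gamma(0)$, hence lying in $\mathcal J_\gamma$, so that the dichotomy indeed applies.
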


\medskip

Let $o$ be a fixed point in $\mathbb{H}^{3}$. We recall that 
\begin{equation*}
T(T_{o}^{1}\mathbb{H}^{3})=\{(u,v)\in T_{o}^{1}\mathbb{H}^{3}\times T_{o}%
\mathbb{H}^{3} : \left\langle u,v\right\rangle =0\}\cong TS^{2}\text{.}
\end{equation*}%
Let $f:\mathcal{L}\rightarrow \mathbb{H}^{3}$ be the map that assigns to
each oriented unit speed geodesic $\ell $ of the hyperbolic space its
closest point to $o$. Considering the following diagram, 
\begin{equation*}
\begin{array}{ccc}
T\left( T_{o}^{1}\mathbb{H}^{3}\right) & \overset{H}{\longrightarrow } & 
\mathcal{L} \\ 
\pi _{2}\downarrow \ \ \ \  &  & \ \ \downarrow f \\ 
T_{o}\mathbb{H}^{3} & \overset{\text{Exp}_{o}}{\longrightarrow } & \mathbb{H}%
^{3}%
\end{array}%
\end{equation*}%
we have that $f$ is a smooth map, where $H$ is the diffeomorphism given in (%
\ref{H}) and $\pi _{2}$ is the projection onto the second component.

Let $D:\mathcal{L}\rightarrow \mathbb{R}$ be the square distance from $o$.
In particular, if $\ell =H\left( u,v\right) $, we have that $D(\ell
)=|v|^{2} $ and so $D$ is smooth.

For any unit speed geodesic $\gamma$, let $\psi _{\gamma }:T_{[\gamma] }
\mathcal{L}\simeq \mathcal{J}_{\gamma }\rightarrow \dot{\gamma}(0)^{\bot }$
be the linear map defined by $\psi _{\gamma }(J)=J(0)$.

\begin{lemma}
\label{distancia} 
Let $\mathcal{M}$ be an almost semidefinite closed connected
two-dimensional submanifold of $\mathcal{L}$.

\begin{enumerate}
\item For any $\ell=[\gamma] \in \mathcal{M}$, $\left.{\psi_{\gamma}}%
\right\vert_{T_{\ell}\mathcal{M}}$ is surjective.

\item Any critical point $\ell $ of $\left. D\right\vert _{\mathcal{M}}$ is
a strict local minimum of $\left. D\right\vert _{\mathcal{M}}$ with $D(\ell
)=0$. Moreover, $D(\ell _{n})\rightarrow \infty $ as $n\rightarrow \infty $
for any sequence $\ell _{n}$ in $\mathcal{M}$ without cluster points.
\end{enumerate}
\end{lemma}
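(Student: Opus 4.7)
My plan for Lemma \ref{distancia} is as follows.

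For (a), I would argue by dimension: both $T_\ell\mathcal{M}$ and $\dot\gamma(0)^\perp$ are two-dimensional, so it suffices to establish injectivity of $\psi_\gamma|_{T_\ell\mathcal{M}}$. Suppose $T_\gamma(J) \in T_\ell\mathcal{M}$ with $J(0) = 0$. Since $\mathbb{H}^3$ has constant sectional curvature $-1$, the Jacobi equation for orthogonal fields reduces to $J'' = J$, so $J(t) = \sinh(t)\,B(t)$ for a parallel vector field $B \perp \dot\gamma$, which is nonzero if $J \ne 0$. Substituting into (\ref{n}), one gets $\|T_\gamma(J)\|_\times = \sinh(t)\cosh(t)\langle \dot\gamma\times B,B\rangle = 0$ (as $\dot\gamma\times B \perp B$) and $\|T_\gamma(J)\|_K = (\sinh^2 t - \cosh^2 t)|B|^2 = -|B|^2 < 0$, contradicting the almost semidefinite condition.

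For the first assertion in (b), I would parameterize $\gamma$ so that $\gamma(0) = f(\ell) = \text{Exp}_o(v_0)$, and let $c(r) = \text{Exp}_o(rv_0)$, so $|\dot c| \equiv |v_0| = \sqrt{D(\ell)}$ and $\dot c(1) \perp \dot\gamma(0)$. Using that $D\circ H(u,v) = |v|^2$ together with Gauss's lemma for $\text{Exp}_o$ at $v_0$, a short computation gives
\[
dD_\ell(T_\gamma(J)) = 2\langle \dot c(1), J(0)\rangle.
\]
At a critical point this must vanish on all of $T_\ell\mathcal{M}$; by (a) the values $J(0)$ sweep out all of $\dot\gamma(0)^\perp$, and since $\dot c(1)$ also lies in $\dot\gamma(0)^\perp$, taking $J(0) = \dot c(1)$ would force $\dot c(1) = 0$, so $v_0 = 0$ and $D(\ell) = 0$.

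For the strict local minimum and the properness, I would introduce the ``link'' $S_o := H(T_o^1\mathbb{H}^3\times\{0\}) = D^{-1}(0)$ of oriented geodesics through $o$, a smooth 2-dimensional submanifold of $\mathcal{L}$. When $\gamma(0) = o$, the tangent space $T_\ell S_o$ is exactly $\ker \psi_\gamma$, since variations through $o$ correspond to Jacobi fields vanishing at $0$. By (a), $T_\ell\mathcal{M}\cap\ker\psi_\gamma = 0$, so $\mathcal{M}$ and $S_o$ meet transversally at $\ell$ (two 2-planes intersecting trivially in the 4-dimensional $\mathcal{L}$), whence $\ell$ is isolated in $\mathcal{M}\cap S_o = \mathcal{M}\cap D^{-1}(0)$, and this is the strict local minimum statement. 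For the last assertion, closedness of $\mathcal{M}$ in $\mathcal{L}$ ensures that a sequence $\ell_n$ without cluster points in $\mathcal{M}$ also has none in $\mathcal{L}$; writing $H^{-1}(\ell_n) = (u_n, v_n)$, boundedness of $D(\ell_n) = |v_n|^2$ combined with compactness of $T_o^1\mathbb{H}^3 \cong S^2$ would give a cluster point for $(u_n, v_n)$, hence for $\ell_n$ via the diffeomorphism $H$, a contradiction; so $D(\ell_n) \to \infty$.

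The main obstacle will be the first variation computation in (b): one has to interpret $dD$ correctly, either by unpacking the definition of $H$ together with Gauss's lemma, or equivalently by applying the envelope theorem to $\inf_s d(o,\alpha_t(s))^2$ at the nondegenerate minimum $s=0$. Once the formula $dD_\ell(T_\gamma(J)) = 2\langle \dot c(1), J(0)\rangle$ is in hand, both the vanishing of $D$ at critical points and the transversality with $S_o$ drop out of (a), and the properness is a matter of compactness.
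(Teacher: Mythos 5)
Your proposal is correct, and parts of it coincide with the paper's argument: part (a) is the same injectivity-plus-dimension-count (the paper gets $J'(0)=0$ directly from $\Vert T_\gamma(J)\Vert_\times=0$ and $\Vert T_\gamma(J)\Vert_K\geq 0$ evaluated at $s=0$, without writing out $J=\sinh(s)B(s)$, but the content is identical), and your first-variation identity $dD_\ell(T_\gamma(J))=2\langle \dot c(1),J(0)\rangle$ is exactly what the paper establishes via a standard presentation, the reparametrization $K=\lambda_0'\dot\gamma_U+J$, and the Gauss lemma, followed by the same choice $J(0)=\dot c(1)=\dot\gamma_v(1)$ (guaranteed by (a)) to force $D(\ell)=0$. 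Where you genuinely diverge is the strict local minimum: the paper computes the second variation, $\left.\tfrac{d^2}{dt^2}\right\vert_0 D([\gamma_t])=2|v_0'|^2>0$ with $v_0'\neq 0$ coming from $(d\,\mathrm{Exp}_o)_v(v_0')=K(0)=\lambda_0'U+J(0)\neq 0$, whereas you observe that $D\geq 0$ reduces the claim to $\ell$ being isolated in $\mathcal{M}\cap D^{-1}(0)$ and get this from transversality of $\mathcal{M}$ with the surface $S_o=H(T_o^1\mathbb{H}^3\times\{0\})$ of geodesics through $o$, using $T_\ell S_o=\ker\psi_\gamma$ and part (a). Your route is softer and avoids the second derivative entirely; the paper's computation yields slightly more, namely that the Hessian of $D\vert_{\mathcal{M}}$ at $\ell$ is positive definite (a nondegenerate minimum in the Morse sense), which is convenient if one wants to invoke Morse-theoretic arguments later, though the lemma as stated only needs strictness. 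Your compactness argument for $D(\ell_n)\to\infty$ (boundedness of $|v_n|^2$ plus compactness of $T_o^1\mathbb{H}^3$ would produce a cluster point, contradicting closedness of $\mathcal{M}$) is the intended one; the paper simply defers it to the Euclidean reference.
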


\begin{proof}
\noindent (a) It suffices to show that the map is injective 
($T_{\ell }\mathcal{M}$ and $\dot{\gamma}(0)^{\bot }$ have the same dimension). 
If $\psi _{\gamma}(J)=0$, then $J(0)=0$ and from (\ref{n}) we have that $\Vert T_{\gamma
}(J)\Vert _{\times}=0$. Since $\mathcal{M}$ is almost semidefinite, using (\ref{n}%
) we obtain that $J^{\prime }(0)=0$, thus $J\equiv 0$. 

\medskip

\noindent (b) Let $(u,v)\in T(T_{o}^{1}\mathbb{H}^{3})$ and let $H(u,v)=\ell$. 
In particular, $\ell =[\gamma _{U}]$ with $U=\left. \tau _{\gamma
_{v}}\right\vert _{0}^{1}(u)$, where $\tau _{\gamma _{v}}$ denotes the
parallel transport along $\gamma _{v}$. By $(\text{a})$, there exists a
Jacobi vector field $J\in \mathcal{J}_{\gamma _{U}}$, with $T_{\gamma
_{U}}\left( J\right) \in T_{\ell }\mathcal{M}$, such that $J(0)=\dot{\gamma}%
_{v}(1)$. We take a variation of $\gamma _{U}$ by unit speed geodesics $%
\Gamma (s,t)=\gamma _{t}(s)$ associated with $J$, with $[\gamma
_{t}]=H(u_{t},v_{t})\in \mathcal{M}$. We call $\alpha $ the smooth curve in $%
\mathcal{M}$ given by $\alpha (t)=[\gamma _{t}]$. We have $v_{t}=\pi
_{2}\circ \,H^{-1}\circ \,\alpha (t)$, thus $v_{t}$ is a smooth curve in $%
T_{o}\mathbb{H}^{3}$.

Suppose that $\ell \in \mathcal{M}$ is a critical point of $\left.
D\right\vert _{\mathcal{M}}$. First we verify that $D(\ell )=0$. We compute

\begin{equation}
0=\dot{\alpha}\left( 0\right) (D)=\left. \dfrac{d}{dt}\right\vert
_{0}D(\alpha (t))=\left. \frac{d}{dt}\right\vert _{0}|v_{t}|^{2}=2\langle
v,v_{0}^{\prime }\rangle \text{.}  \label{pi}
\end{equation}
Now, let $\lambda _{t}$ be the smooth curve in $\mathbb{R}$ such that $\text{%
Exp}_{o}(v_{t})=\gamma _{t}(\lambda _{t})$ (in particular, $\lambda _{0}=0$)
and consider the Jacobi vector field $K$ associated with the geodesic
variation $(s,t)\rightarrow \Delta (s,t)=\Gamma (s+\lambda _{t},t)$, that
is, 
\begin{equation*}
K(s)=\lambda _{0}^{\prime }\,\dot{\gamma}_{U}(s)+J(s)\text{.}
\end{equation*}%
Since $\text{Exp}_{o}(v_{t})=\Delta (0,t)$, we have 
\begin{equation}
(d\,\text{Exp}_{o})_{v}(v_{0}^{\prime })=K(0)=\lambda _{0}^{\prime }\,U+J(0)%
\text{.}  \label{v'0}
\end{equation}%
Besides, $(d\,\text{Exp}_{o})_{v}(v)=J(0)$. Then, by (\ref{pi}), the Gauss
Lemma and (\ref{v'0}), we obtain 
\begin{equation}
0=\langle v,v_{0}^{\prime }\rangle =\langle \left( d\,\text{Exp}_{o}\right)
_{v}(v),\left( d\,\text{Exp}_{o}\right) _{v}(v_{0}^{\prime })\rangle
=|J(0)|^{2}=|v|^{2}=D(\ell ),  \label{D}
\end{equation}%
as desired. Next we see that $\ell $ is a strict local minimum. Let $X$
be a nonzero vector in $T_{\ell }\mathcal{M}$ and let $J\in \mathcal{J}%
_{\gamma _{u}}$ such that $X=T_{\gamma _{u}}(J)$. Since $J$ is not an 
identically zero Jacobi vector field, by (a) we have $J(0)\neq 0$. As above,  
we take a smooth curve $[\gamma _{t}]=H(u_{t},v_{t})$ in 
$\mathcal{M}$ such that its initial velocity is $X$ and $J(s)=\left. \frac{
d}{dt}\right\vert _{0}\gamma _{t}(s)$. Then, 
\begin{equation*}
\left. \frac{d^{2}}{dt^{2}}\right\vert _{0}D([\gamma _{t}])=2(\langle
v_{0},v_{0}^{\prime \prime }\rangle +|v_{0}^{\prime }|^{2})=2|v_{0}^{\prime
}|^{2}>0,
\end{equation*}%
since $v_0 =v=0$ by (\ref{D}) and $v_{0}^{\prime }\neq 0$ by (\ref{v'0}).

The last statement is proved in a similar way as in Lemma 5(b) of \cite%
{Salvai-2009}.
\end{proof}

\medskip

\begin{proof}[Proof of Theorem \ref{adefinite}]
\noindent (a)\thinspace $\Rightarrow $\thinspace(b)
Suppose that the foliation is given by a smooth unit vector field $V$ on 
$\mathbb{H}^{3}$ and let $P:\mathbb{H}^{3}\rightarrow \mathcal{M}$ be the
smooth submersion induced by $V$ as in Proposition \ref{est.dif}. Since 
$\mathcal{M}=P(\mathbb{H}^{3})$, we have that $\mathcal{M}$ is connected. 
The fact that the inclusion $i:\mathcal{M}\rightarrow \mathcal{L}$ is a submanifold 
is proved in the same way as in the Euclidean case (the beginning of (a)\thinspace $\Rightarrow$\thinspace(b) 
in the proof of Theorem 2 in \cite{Salvai-2009}).

Let us see that $\mathcal{M}$ is almost semidefinite. Let $X\in T_{[\gamma ]}%
\mathcal{M}$ with $\Vert X\Vert _{\times }=0$ and let $J\in \mathcal{J}%
_{\gamma }\mathcal{\ }$with $X=T_{\gamma }(J)$. We want to see that $%
\left\Vert X\right\Vert _{K}\geq 0$. First, we observe that if $\gamma _{t}$
is any variation of $\gamma $ by geodesics in the foliation, associated with $%
J $, we have that $\dot{\gamma}_{t}\left( s\right) =V\left( \gamma
_{t}\left( s\right) \right) $ and so we compute 
\begin{equation}
J^{\prime }(s)=\frac{D}{ds}\left. \frac{d}{dt}\right\vert _{0}\gamma
_{t}\left( s\right) =\left. \frac{D}{dt}\right\vert _{0}\frac{d}{ds}\gamma
_{t}\left( s\right) =\left. \frac{D}{dt}\right\vert _{0}\dot{\gamma}%
_{t}\left( s\right) =\nabla _{J(s)}V\text{. }  \label{Jprima}
\end{equation}%
By (\ref{n}), $J\left( 0\right) $ and $J^{\prime }\left( 0\right) $ are
linearly dependent. If $J(0)=0$, then $J^{\prime }(0)=0$ by (\ref{Jprima}),
and so $\Vert X\Vert _{K}=0$. If $J(0)\neq 0$, there exists $a\in \mathbb{R}$
such that $J^{\prime }(0)=aJ(0)$. So, $J(s)=(a\, \sinh s + \cosh s)Z(s)$, where $Z$ is a 
parallel vector field along $\gamma$ and orthogonal to $\dot{\gamma}$. If $|a|>1$ 
there exists $s_o=(\tanh)^{-1} (-1/a)$ such that $J(s_o)=0$. By (\ref{Jprima}), we have that
$J'(s_o)=0$. Hence, $J\equiv 0$, which is a contradiction. Therefore, $|a|\leq 1$ and consequently
$\left\Vert X\right\Vert _{K}=(1-a^2)|J(0)|^2\geq 0$, as desired.

Next we show that $\mathcal{M}$ is closed. Let $[\gamma _{n}]=H(u_{n},v_{n})$
be a sequence in $\mathcal{M}$ with $\lim_{n\rightarrow \infty }[\gamma
_{n}]=[\gamma ]\in \mathcal{L}$. Let $(u,v)\in T(T_{o}^{1}\mathbb{H}^{3})$
such that $[\gamma ]=H(u,v)$. Since $H$ is a diffeomorphism we have that $%
(u_{n},v_{n})\rightarrow (u,v)$. Let $\bar{H}:T(T_{o}^{1}\mathbb{H}%
^{3})\rightarrow T^{1}\mathbb{H}^{3}$ be the smooth map defined by $\bar{H}%
(u,v)=\left. {\tau _{\gamma _{v}}}\right\vert _{0}^{1}(u)$ and recall that $%
H=\Pi \circ \bar{H}$ holds by definition of $H$, where $\Pi :T^{1}\mathbb{H}%
^{3}\rightarrow \mathcal{L}$ is the canonical projection. Since $[\gamma
_{n}]\in \mathcal{M}$, $\bar{H}(u_{n},v_{n})=V(\text{Exp}_{o}(v_{n}))$. So,
to prove that $\mathcal{M}$ is closed we have to see that $\bar{H}(u,v)=V(%
\text{Exp}_{o}(v))$. Now, the assertion follows from the continuity of $\bar{%
H}\text{, Exp}_{o}$ and $V$.

\bigskip

\noindent (b)\thinspace $\Rightarrow $\thinspace(a) The facts that the
union of all geodesics in $\mathcal{M}$ covers the whole space $\Hy$ and that two
distinct geodesics in $\mathcal{M}$ do not intersect are proved in a similar
way as in Theorem 2 of \cite{Salvai-2009}, but using in this case Lemma \ref%
{distancia} (b). As in that theorem, the hypotheses force the existence of
only one critical point (cf.\ the second paragraph of Remark \ref{closed}) and that $\M$ is 
diffeomorphic to $\R^2$.

Next, we define the vector field $V$ which determines the foliation. Given $%
z\in \mathbb{H}^{3}$, let $V(z)=\dot{\gamma}(t)$, where $[\gamma ]$ is the
unique element in $\mathcal{M}$ such that $z$ is in the trajectory of $%
\gamma $ and $z=\gamma (t)$. Now, we verify that $V$ is smooth. The
arguments differ from those in the Euclidean case only at the end, but we
include the details for the sake of completeness. The image of $V$ coincides
with $\Pi ^{-1}(\mathcal{M})$, and hence it is a smooth submanifold of $T^{1}%
\mathbb{H}^{3}$, since $\Pi $ is a fiber bundle. We have to check that zero
is the only vertical (with respect to $p:T^{1}\mathbb{H}^{3}\rightarrow 
\mathbb{H}^{3}$) tangent vector $\eta $ of the image of $V$. Suppose that $%
(dp)_{V(z)}(\eta )=0$ and let $t\mapsto V\circ c(t)$ be a smooth curve in $%
T^{1}\mathbb{H}^{3}$ such that $c(0)=z$ and with initial velocity equal to $%
\eta $. So, we have that $c^{\prime }(0)=0$. Let $\ell $ be the curve in $%
\mathcal{M}$ defined by $\ell (t)=\Pi (V(c(t)))$ and set $\ell ^{\prime
}(0)=X$. Let $\ell (0)=[\gamma ]$ with $\gamma (0)=c(0)$ and let $%
J(s)=\left. {\frac{d}{dt}}\right\vert _{0}\gamma _{V(c(t))}(s)$. We compute $%
J(0)=c^{\prime }(0)=0$ and we have that $J^{\prime }\left( 0\right) $ is orthogonal
to $\dot{\gamma}\left( 0\right) $, since $V$ is a unit vector field. Hence, $%
X=T_{\gamma }(J)$ and $\Vert X\Vert _{\times }=0$ by (\ref{n}) and so $%
\left\Vert X\right\Vert _{K}\geq 0$ since $\mathcal{M}$ is almost semidefinite.
This implies, again by (\ref{n}), that $J^{\prime }(0)=0$. Finally, if we
consider the isomorphism 
\begin{equation}
(dp_{V(z)},\mathcal{K}_{V(z)}):T_{V(z)}T\mathbb{H}^{3}\rightarrow T_{z}%
\mathbb{H}^{3}\times T_{z}\mathbb{H}^{3},  \label{isophi_v}
\end{equation}%
where $\mathcal{K}_{V(z)}$ is the connection operator, we obtain that $\eta $
is equal to zero, since $(dp_{V(z)},\mathcal{K}_{V(z)})(\eta
)=(J(0),J^{\prime }(0))=(0,0)$.
\end{proof}

\medskip

\begin{remark}\label{closed}
We construct in Proposition \ref{ejemplo} below an example of a two 
dimensional submanifold $\mathcal{M}$ of $\mathcal{L}$ satisfying 
all conditions of part (b) in Theorem \ref{adefinite}, except
to be closed. The geodesics in $\mathcal{M}$ not only fail to foliate the
whole $\mathbb{H}^{3}$, as expected, but they do not even foliate the open
set $\mathcal{U}$ in $\mathbb{H}^{3}$ given by the union of all their
trajectories (there exist two geodesics in $\mathcal{M}$ intersecting at a
point in $\mathcal{U}$).

The same proposition shows that if the hypothesis that $\mathcal{M}$ is
closed in $\mathcal{L}$ is removed in Lemma \ref{distancia}, there might
exist two different critical points $\ell _{1}$ and $\ell _{2}$ of $\left.
D\right\vert _{\mathcal{M}}$. One can take $o=f\left( 2,0\right) $ and as $%
\ell _{1}$ and $\ell _{2}$ the geodesics through $o$ with initial velocities 
$V_{\lambda}\left( 2,0\right) $ and $V_{\lambda}\left( 2,2\pi \right) $.
\end{remark}

\medskip 

We begin by defining an immersion $f$ of an open set of the plane into 
a totally geodesic submanifold $S$ of $\mathbb{H}^{3}$ covering an annulus in $S$ in a
non-injective way. Let $U$ be an open set in the half plane $\left\{ \left(
r,t\right) \mid r>0\right\} $ containing the rectangle $R=\left[ 1,3\right]
\times \left[ -\delta ,2\pi +\delta \right] $. Fix $o\in \mathbb{H}^{3}$ and
define $f:U\rightarrow \mathbb{H}^{3}$ by 
\begin{equation*}
f\left( r,t\right) =\text{Exp}_{o}\left( r\cos t~u_{o}+r\sin t~v_{o}\right) 
\text{,}
\end{equation*}%
where $u_{o},v_{o}\in T_{o}\mathbb{H}^{3}$ are unit orthogonal vectors. We
consider vector fields $u,v,w$ along $f$ forming an orthonormal basis of $%
T_{f\left( r,t\right) }\mathbb{H}^{3}$ for each $r,t$. They are given by 
\begin{equation*}
u=\frac{\partial f}{\partial r},\ \ \ \ v=\frac{1}{\sinh r}\frac{\partial f}{%
\partial t},\ \ \ \ w=u\times v\text{.}
\end{equation*}%
Now, let $\alpha_{\lambda} \left( r,t\right) =\alpha _{0}+\lambda t-\lambda r$ for
some $\lambda >0$ and $\alpha _{0}\in (0,\pi /2)$, and let $V_{\lambda}$ be the vector
field along $f$ defined by $V_{\lambda}=\cos \alpha_{\lambda} ~v+\sin \alpha_{\lambda} ~w$. Let $R^{o}$ be
the interior of $R$.

\begin{proposition}\label{ejemplo} 
For some $\lambda>0$, the map 
\begin{equation*}
F_{\lambda}:R^{o}\rightarrow \mathcal{L}\text{, \ \ \ \ \ }F_{\lambda}\left( r,t\right) =\left[
\gamma _{V_{\lambda}\left( r,t\right) }\right]
\end{equation*}%
is an immersion and $g_{\times }$ induces a Riemannian metric on its image $%
F_{\lambda}\left( R^{o}\right) =\mathcal{M}$. Moreover, the trajectories of $\gamma
_{V_{\lambda}(r,0)}$ and $\gamma _{V_{\lambda}(r,2\pi )}$ intersect at $f(r,0)$, for each $r\in
(1,3)$.
\end{proposition}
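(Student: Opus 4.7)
My plan is to compute the differential $dF_\lambda$ via Jacobi fields and verify directly that $F_\lambda$ is an immersion with positive definite pullback of $g_\times$. Both properties will come out of the same explicit $2\times 2$ matrix, and the intersection property in the last sentence will follow at once from the $2\pi$-periodicity of $f$ in the $t$-variable.

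Fixing $(r_0,t_0)\in R^{o}$, I will set $\gamma=\gamma_{V_\lambda(r_0,t_0)}$ and $\alpha=\alpha_\lambda(r_0,t_0)$, and introduce the orthonormal basis $e_1=u$, $e_2=-\sin\alpha\,v+\cos\alpha\,w$ of $\dot\gamma(0)^\perp$. I will then read off the initial data of the Jacobi fields $J_r,J_t$ along $\gamma$ coming from the two-parameter variation $\Gamma(s;r,t)=\gamma_{V_\lambda(r,t)}(s)$. This reduces to computing the covariant derivatives of the frame $u,v,w$ along the coordinate curves on the totally geodesic surface $S=f(U)$. Along the radial geodesics all three fields turn out to be parallel: $u$ and $v$ by a direct Christoffel-symbol computation in the polar metric $dr^2+\sinh^2 r\, dt^2$ on $\mathbb{H}^2\cong S$, and $w$ because $S$ is totally geodesic in $\mathbb{H}^3$. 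Along the $t$-circles the same polar computation gives $Du/dt=\cosh r\,v$ and $Dv/dt=-\cosh r\,u$, while $Dw/dt=0$.

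Using $\partial_r\alpha_\lambda=-\lambda$ and $\partial_t\alpha_\lambda=\lambda$, these formulas will yield
\[
J_r(0)=e_1,\quad J_r'(0)=-\lambda e_2,\quad J_t(0)=\sinh(r_0)\,v,\quad J_t'(0)=-\cosh(r_0)\cos\alpha\, e_1+\lambda e_2.
\]
Since $J_r\in\mathcal{J}_\gamma$ already and $\langle J_t,\dot\gamma\rangle\equiv\sinh(r_0)\cos\alpha$, the corresponding orthogonal representative will be $J_t^\perp=J_t-\sinh(r_0)\cos\alpha\,\dot\gamma$, with $J_t^\perp(0)=-\sinh(r_0)\sin\alpha\,e_2$ and $(J_t^\perp)'(0)=J_t'(0)$. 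Checking that the pairs $(J_r(0),J_r'(0))$ and $(J_t^\perp(0),(J_t^\perp)'(0))$ in $\dot\gamma(0)^\perp\oplus\dot\gamma(0)^\perp$ are linearly independent for every $\lambda>0$ (when $\sin\alpha=0$ the derivatives themselves force independence) will show that $F_\lambda$ is an immersion for every $\lambda>0$.

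For positivity I will evaluate (\ref{n}) at $s=0$ using $\dot\gamma(0)\times e_1=-e_2$ and $\dot\gamma(0)\times e_2=e_1$ (both consequences of the positive orientation of $(u,v,w)$) and polarize, obtaining
\[
F_\lambda^{*}g_\times=\begin{pmatrix}\lambda & -\lambda/2\\ -\lambda/2 & \tfrac{1}{4}\sinh(2r_0)\sin(2\alpha)\end{pmatrix}
\]
in the basis $(\partial_r,\partial_t)$. Positive definiteness reduces to the single inequality $\sinh(2r_0)\sin(2\alpha_\lambda(r_0,t_0))>\lambda$, uniformly on $R$. Because $\alpha_0\in(0,\pi/2)$ and $R$ is compact, for all sufficiently small $\lambda>0$ the angle $\alpha_\lambda(r,t)=\alpha_0+\lambda(t-r)$ stays strictly inside $(0,\pi/2)$ on $R$; then $\sinh(2r)\sin(2\alpha_\lambda(r,t))$ is bounded below by a positive constant independent of $\lambda$, and it suffices to pick $\lambda$ smaller than that bound. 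The last assertion is immediate: the $2\pi$-periodicity of $\cos$ and $\sin$ in the defining formula for $f$ gives $f(r,0)=f(r,2\pi)$, so both $\gamma_{V_\lambda(r,0)}$ and $\gamma_{V_\lambda(r,2\pi)}$ emanate from this common point. I expect the only delicate step to be the covariant-derivative bookkeeping of the moving frame on $S$, after which everything reduces to inspection of a $2\times 2$ matrix.
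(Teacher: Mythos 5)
Your proposal is correct and takes essentially the same route as the paper: both compute the initial data $J(0)$, $J'(0)$ of the variation Jacobi fields from the covariant derivatives of the frame $u,v,w$, arrive at the same quadratic form $\lambda x^{2}-\lambda xy+\tfrac14\sinh(2r)\sin(2\alpha)\,y^{2}$, and choose $\lambda$ small via uniform positivity on the compact rectangle $R$. The only cosmetic differences are that you pass to the orthogonal representative $J_t^{\perp}$ and check injectivity of $dF_\lambda$ separately (obtaining it for every $\lambda>0$), whereas the paper keeps the non-orthogonal Jacobi field, noting that the expression for $g_{\times}$ in (\ref{n}) still applies, and gets the immersion property as a byproduct of positive definiteness.
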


\begin{proof}
We fix $\left( r,t\right) \in R^{o}$ and $0\neq \left( x,y\right) \in T_{\left( r,t\right) }R^{\circ}$. 
For the sake of simplicity we omit the subindex $\lambda$ and write $\alpha$ instead of $\alpha_{\lambda}(r,t)$. 
Let us see that $\left\Vert dF_{\left( r,t\right) }\left( x,y\right) \right\Vert _{\times }>0$. Let $J$
be the Jacobi field associated with the variation $s\mapsto \gamma _{V\left(
r+sx,t+sy\right) }$. We compute that $J\left( 0\right) =x\,u+y\sinh r\,v.$
Now, since%
\begin{equation*}
\nabla _{u}u=0,\,\nabla _{u}v=0,\,\nabla _{u}w=0,\,\nabla _{v}u=\left( \coth
r\right) v,\,\nabla _{v}v=-\left( \coth r\right) u\,\,\text{and}\,\,\nabla
_{v}w=0\text{,}
\end{equation*}%
we obtain that 
\begin{equation*}
J^{\prime }(0)=-\left( y\cosh r\cos \alpha \right) u+\lambda (x-y)\left(
\sin \alpha \right) v-\lambda (x-y)\left( \cos \alpha \right) w\text{.}
\end{equation*}%
On the other hand, calling $\gamma =\gamma _{V(r,t)}$, we have that
\begin{equation*}
\dot{\gamma}(0)\times J(0)=V(r,t)\times J(0)=-\left( y\sinh r\sin \alpha
\right) u+\left( x\sin \alpha \right) v-\left( x\cos \alpha \right) w\text{.}
\end{equation*}%
Then, since the expression for $g_{\times }$ in (\ref{n}) is valid also if $J
$ is not orthogonal to $\dot{\gamma}$, we have that 
\begin{equation*}
\left\Vert dF_{\left( r,t\right) }\left( x,y\right) \right\Vert _{\times
}=\langle \dot{\gamma}(0)\times J(0),J^{\prime }(0)\rangle =\lambda
x^{2}-\lambda xy+\tfrac{1}{4}\left( \sinh 2r\sin 2\alpha \right) \,y^{2}.
\end{equation*}%
Thus, for $\mathcal{M}$ to be Riemannian, it is enough that $\lambda$
makes this bilinear form positive definite for all $\left( r,t\right)
\in R$. Equivalently, that $h_{\lambda }\left( r,t\right) >0$ for all $%
\left( r,t\right) \in R$, where for each $\lambda >0$, $h_{\lambda
}:R\rightarrow \mathbb{R}$ is defined by 
\begin{equation*}
h_{\lambda }\left( r,t\right) =\sinh \left( 2r\right) \sin \left( 2\left(
\alpha _{0}+\lambda t-\lambda r\right) \right) -\lambda \text{.}
\end{equation*}%
Now, $h_{\lambda }$ converges pointwise (and also uniformly, since $R$ is
compact) to $\sinh \left( 2r\right) \sin \left( 2\alpha _{0}\right) $ as $%
\lambda \rightarrow 0^{+}$. Since the limit function is positive, for $%
\lambda >0$ small enough, $h_{\lambda }\left( r,t\right) >0$ for all $\left(
r,t\right) \in R$, as desired.
\end{proof}

\section{Global nondegenerate geodesic foliations of $\mathbb{H}^{3}$}

Two unit speed geodesics $\gamma $ and $\alpha $ of $\mathbb{H}^{3}$ are
said to be \emph{asymptotic} if there exists a positive constant $C$ such
that $d(\gamma (s),\sigma (s))\leq C$, $\forall s\geq 0$ \cite{Eberlein-1996}. 
Two unit vectors $v,w\in T^{1}\mathbb{H}^{3}$ are said to be asymptotic if
the corresponding geodesics $\gamma _{v}$ and $\gamma _{w}$ have this
property.

A point at infinity for $\mathbb{H}^{3}$ is an equivalence class of
asymptotic geodesics of $\mathbb{H}^{3}$. The set of all points at infinity
for $\mathbb{H}^{3}$ is denoted by $\mathbb{H}^{3}(\infty )$ and has a
canonical differentiable structure diffeomorphic to the $2$-sphere. The
equivalence class represented by a geodesic $\gamma $ is denoted by $\gamma
(\infty )$ and the equivalence class represented by the oppositely oriented
geodesic $s\mapsto \gamma (-s)$ is denoted by $\gamma (-\infty )$. Let $%
\varphi ^{\pm }:\mathcal{L}\rightarrow \mathbb{H}^{3}(\infty )$ be the
forward Gauss map (for $+$) and the backward Gauss map (for $-$), defined by $%
\varphi ^{\pm }([\gamma ])=\gamma (\pm \infty )$, which are smooth.

In the introduction we commented on some distinguished types of geodesic
foliations of $\mathbb{H}^{3}$, regarding to which extent they are in some
sense trivial in some directions. That motivates the following precise definitions. 
Before we recall that by Theorem \ref{adefinite}, any smooth geodesic foliation 
of $\Hy$ has an associated submanifold $\M$ of $\LH$.

\begin{definition} \label{nondeg}
We say that a smooth foliation by oriented
geodesics of $\mathbb{H}^{3}$ is \emph{semi-nondegenerate} if the Gauss maps 
$\varphi ^{\pm }:\mathcal{M}\rightarrow \mathbb{H}^{3}(\infty )$ are local
diffeomorphisms, where $\mathcal{M}\subset \mathcal{L}$ is the space of
leaves. And we say that it is \emph{nondegenerate} if the only eigenvectors of $\nabla V$ are
in $\R V$, where $V$ is the unit vector field that determines the foliation.
\end{definition}

In order to characterize the semi-nondegenerate and nondegenerate global geodesic foliations of the
hyperbolic space in terms of the geometry of $\mathcal{L}$, we have the next
definition.

\begin{definition} 
A submanifold $\mathcal{M}$ of $\mathcal{L}$
is said to be \emph{semidefinite} if $\Vert X\Vert _{\times }=0$
for a nonzero $X\in T\mathcal{M}$ only if $\Vert X\Vert _{K}>0$.
\end{definition} 

\begin{theorem}\label{SAD} 
Let $\M$ be the submanifold of $\LH$ associated with a foliation of $\Hy$ by oriented geodesics.
Then 
\begin{enumerate}
\item the foliation is semi-nondegenerate if and only if $\M$ is semidefinite.
\item the foliation is nondegenerate if and only if $g_\times $ induces on $\M$ a definite metric.
\end{enumerate}
\end{theorem}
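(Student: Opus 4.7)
The plan is to derive both equivalences from the Jacobi-field normal form already developed in the (a)$\Rightarrow$(b) direction of Theorem~\ref{adefinite}. Since $\M$ comes from a foliation with unit vector field $V$, for any $X=T_{\gamma}(J)\in T_{[\gamma]}\M$ one has $J'(0)=\nabla_{J(0)}V$; combined with $\|X\|_{\times}=0$ this forces either $J\equiv 0$ or $J(0)\neq 0$ and $J'(0)=aJ(0)$ for some $a\in\R$ with $|a|\le 1$, in which case
\[
J(s)=(\cosh s+a\sinh s)\,Z(s),\qquad \|X\|_{K}=(1-a^{2})\,|J(0)|^{2},
\]
with $Z$ parallel along $\gamma$ and orthogonal to $\dot\gamma$. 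I would record this as a preliminary observation before turning to (a) and (b).

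For part~(a), the key step is to identify $\ker d\varphi^{\pm}$ in terms of $a$. Any Jacobi field orthogonal to $\dot\gamma$ decomposes as $J(s)=P(s)\,e^{s}+Q(s)\,e^{-s}$ with $P,Q$ parallel along $\gamma$, and the condition $d\varphi^{+}(T_{\gamma}(J))=0$ amounts to $J$ staying bounded at $+\infty$, i.e.\ $P=0$, i.e.\ $J'(0)=-J(0)$; symmetrically $\ker d\varphi^{-}$ consists of the $J$ with $J'(0)=J(0)$. Applied to the preliminary observation, a nonzero $X\in T\M$ with $\|X\|_{\times}=0$ satisfies $\|X\|_{K}=0$ iff $a=\pm 1$ iff $X\in\ker d\varphi^{+}\cup\ker d\varphi^{-}$. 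Since $d\varphi^{\pm}$ go between equidimensional manifolds, triviality of both kernels is equivalent to $\varphi^{\pm}$ being local diffeomorphisms, yielding the claimed equivalence with semidefiniteness of $\M$.

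For part~(b), I first translate the defining condition: since $|V|=1$ forces $\nabla V$ to take values in $V^{\perp}$ and $\nabla_{V}V=0$, nondegeneracy of the foliation is equivalent to the restriction $A:=\nabla V|_{V^{\perp}}$ having no real eigenvector at any point of $\Hy$. The preliminary observation together with the bijectivity of $\psi_{\gamma}|_{T_{[\gamma]}\M}\to\dot\gamma(0)^{\perp}$ from Lemma~\ref{distancia}(a) then imply that for a nonzero $X=T_{\gamma}(J)\in T_{[\gamma]}\M$ the vanishing $\|X\|_{\times}=0$ holds exactly when $J(0)$ is a real eigenvector of $A$ at $\gamma(0)$. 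Hence $g_{\times}$ is definite on $T_{[\gamma]}\M$ iff $A$ has no real eigenvector there, and as $[\gamma]$ ranges over $\M$ it covers all of $\Hy$, giving the equivalence.

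Both parts essentially reread the normal form from Theorem~\ref{adefinite} in light of the stable/unstable decomposition of Jacobi fields in $\Hy$, so I do not anticipate a genuine technical obstacle. The points that deserve care are the correct pairing of signs in $\ker d\varphi^{\pm}\leftrightarrow a=\mp 1$ and the clean reduction of the definition of nondegeneracy, formulated on the whole tangent bundle, to the two-dimensional condition on $A$.
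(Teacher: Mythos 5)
Your argument is correct and follows essentially the same route as the paper: part (b) is the paper's proof repackaged (reduce nondegeneracy to the absence of real eigenvectors of $\nabla V$ restricted to $V^{\perp}$, then use $J'(0)=\nabla_{J(0)}V$ from (\ref{Jprima}) and the isomorphism $\psi_{\gamma}|_{T_{[\gamma]}\M}$ of Lemma \ref{distancia}(a)), and part (a) rests on the same stable/unstable dichotomy for Jacobi fields. The one step you assert rather than prove is the identification $\ker(d\varphi^{+})_{[\gamma]}=\{T_{\gamma}(J)\,:\,J'(0)=-J(0)\}$: the inclusion of the stable fields into the kernel is Eberlein's Proposition 1.10.7 (used by the paper in the forward direction), but the reverse inclusion is precisely what the paper's Lemma \ref{jacobi} supplies via the computation $K'(0)=J(0)+J'(0)$ for the asymptotic reparametrization $K$ (alternatively one can get it from a dimension count, using that $\varphi^{+}\times\varphi^{-}$ is a diffeomorphism of $\LH$ onto the complement of the diagonal in $\Hy(\infty)\times\Hy(\infty)$). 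Granting that, your chain — the nonzero $X\in T\M$ with $\Vert X\Vert_{\times}=\Vert X\Vert_{K}=0$ are exactly the nonzero elements of $\ker d\varphi^{+}\cup\ker d\varphi^{-}$, and almost semidefiniteness from Theorem \ref{adefinite}(a)$\Rightarrow$(b) upgrades this to the stated equivalence with semidefiniteness — is a clean and correct packaging of the paper's two-directional argument, with the sign pairing $a=\mp1\leftrightarrow\ker d\varphi^{\pm}$ handled correctly.
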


\medskip

Some definitions and lemmas will be necessary to prove the theorem.

A Jacobi vector field $J$ along a geodesic $\gamma $ of $\mathbb{H}^{3}$ is
said to be \emph{stable} (\emph{unstable}) if there exists a constant $c>0$
such that 
\begin{equation*}
|J(s)|\leq c,\hspace{0.5cm}\forall s\geq 0\hspace{0.5cm}(\forall s\leq 0).
\end{equation*}%
It is well-known that a Jacobi vector field $J$ along a geodesic 
$\gamma$ of $\mathbb{H}^{3}$ and orthogonal to $\dot{\gamma}$ is stable
(respectively, unstable) if and only if $J(s)=e^{-s}U(s)$ (respectively, 
$J(s)=e^{s}U(s)$) for some parallel vector field $U$ along $\gamma $
orthogonal to $\dot{\gamma}$.

We recall that given $v\in T^{1}\mathbb{H}^{3}$ and any point $p\in \mathbb{H%
}^{3}$, there exists a unique unit tangent vector at $p$ that is asymptotic
to $v$ (see \cite[Proposition 1.7.3]{Eberlein-1996}). A smooth vector field 
$W$ in $\mathbb{H}^{3}$ is called an asymptotic vector field if $W(p)$ and $W(q)$
are asymptotic for every $p,q\in \mathbb{H}^{3}$.

\begin{lemma}
Let $c$ be a smooth curve in $\mathbb{H}^{3}$. Then an asymptotic vector
field $W$ on $\mathbb{H}^{3}$ satisfies the following differential equation 
\begin{equation}
\nabla _{\dot{c}(t)}W=\langle \dot{c}(t),W_{c(t)}\rangle \,W_{c(t)}-\dot{c}%
(t)\text{.}  \label{Ec.Asintotica}
\end{equation}
\end{lemma}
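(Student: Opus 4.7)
My plan is to prove the identity pointwise at $c(t)$, which amounts to showing that $\nabla_X W = \langle X, W\rangle W - X$ for every tangent vector $X$ at $p$. First I would observe that both sides are automatically orthogonal to $W$: the left side because $W$ is unit (differentiate $\langle W,W\rangle = 1$), the right side by direct inspection. Decomposing $X = \langle X, W(p)\rangle W(p) + Y$ with $Y \perp W(p)$, and using that the integral curves of $W$ are geodesics (they are the unique asymptotic rays from each point) so that $\nabla_W W = 0$, the identity reduces to the pointwise statement
\begin{equation*}
\nabla_X W \;=\; -X \qquad \text{for every } X \in T_p\Hy \text{ with } X \perp W(p).
\end{equation*}

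To prove this reduced identity I would build an asymptotic variation of $\gamma_{W(p)}$. Choose a smooth curve $\alpha$ in $\Hy$ with $\alpha(0)=p$ and $\alpha'(0)=X$, and set
\begin{equation*}
\gamma_s(t) \;=\; \mathrm{Exp}_{\alpha(s)}\!\bigl(tW(\alpha(s))\bigr),
\end{equation*}
so that by hypothesis the $\gamma_s$ are pairwise asymptotic. The variational field $J(t) = \partial_s|_{s=0}\gamma_s(t)$ is then a Jacobi field along $\gamma_0$ satisfying $J(0) = X$ and, by the usual $[\partial_s,\partial_t]=0$ computation, $J'(0) = \nabla_X W$. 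Since each $\gamma_s$ is unit speed we obtain $\langle J'(0), \dot\gamma_0(0)\rangle = 0$, and combined with $J(0) \perp \dot\gamma_0(0)$ this forces $J \perp \dot\gamma_0$ on all of $\mathbb{R}$.

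The crux will be to identify $J$ as a \emph{stable} Jacobi field. This should follow from the fact, characteristic of $\Hy$, that asymptotic geodesic rays in fact converge to each other (see \cite{Eberlein-1996}), so that $|J(t)|$ is bounded (indeed decays) as $t\to +\infty$. An orthogonal Jacobi field of the form $e^{-t}P_1(t) + e^{t}P_2(t)$ with $P_1,P_2$ parallel is bounded on $[0,+\infty)$ only if the unstable coefficient $P_2$ vanishes, so $J$ is stable. By the characterization recalled immediately before the lemma, $J(t) = e^{-t}P(t)$ for some parallel field $P$ along $\gamma_0$ with $P(0) = X$, and hence
\begin{equation*}
\nabla_X W \;=\; J'(0) \;=\; -P(0) \;=\; -X,
\end{equation*}
as required.

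The main obstacle is the passage from asymptoticity to stability: the definition gives a bound $d(\gamma_s(t),\gamma_0(t)) \leq C(s)$ uniform in $t\geq 0$, and one must convert this into a uniform bound on the derivative $|J(t)|$. If this step turns out to be delicate to state cleanly, an alternative route that bypasses stability altogether is to recognize $W = -\nabla b_\xi$ for the Busemann function $b_\xi$ at the common point at infinity of the integral curves of $W$, and then invoke the standard hyperbolic identity $\mathrm{Hess}(b_\xi) = g - db_\xi \otimes db_\xi$, from which (\ref{Ec.Asintotica}) drops out immediately.
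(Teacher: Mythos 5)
Your argument is correct and shares its skeleton with the paper's proof: both decompose $\dot c(t)$ into its components tangent and orthogonal to $W_{c(t)}$, both dispose of the tangential part via $\nabla_W W=0$ (the integral curves of $W$ are geodesics), and both reduce the lemma to the identity $\nabla_X W=-X$ for $X\perp W$. The difference lies entirely in how that reduced identity is justified. The paper simply quotes it as (1.10.9) of \cite{Eberlein-1996}, i.e.\ the fact that the shape operator of a horosphere is the identity; note that your Busemann-function alternative is this same fact in disguise, since $\mathrm{Hess}\,b_\xi=g-db_\xi\otimes db_\xi$ is precisely the statement about the second fundamental form of the level sets of $b_\xi$. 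Your primary route, via stable Jacobi fields, is genuinely more self-contained and meshes well with the characterization of stability recalled just before the lemma; what it costs you is the step you flag, namely that the variational field of the asymptotic variation $\gamma_s$ is \emph{stable}. That step closes cleanly: in nonpositive curvature $t\mapsto d(\gamma_s(t),\gamma_0(t))$ is convex, so being bounded on $[0,\infty)$ (by asymptoticity) it is nonincreasing there, giving $d(\gamma_s(t),\gamma_0(t))\le d(\alpha(s),p)$ for all $t\ge 0$; dividing by $|s|$ and letting $s\to 0$ yields $|J(t)|\le|J(0)|=|X|$ for all $t\ge 0$, which is stability. (Alternatively, this correspondence between asymptotic variations and stable Jacobi fields is exactly Proposition 1.10.7 of \cite{Eberlein-1996}, which the paper itself invokes in the proof of Theorem~\ref{SAD}.) One small caveat: asymptotic geodesics in $\Hy$ need not converge to each other pointwise --- $d(\gamma_s(t),\gamma_0(t))$ tends to a possibly nonzero constant when the parametrizations are offset --- but boundedness is all you need, and since your $J$ is orthogonal the decay you mention does hold a posteriori once you know $J(t)=e^{-t}P(t)$.
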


\begin{proof} 
After decomposing $\dot{c}(t)$ into its components tangent and orthogonal to $W_{c(t)}$,
the statement follows directly from the following equations:
\begin{equation*}
\nabla _{X}\,W=-X,\,\text{if}\,\,X\bot \,W\hspace{0.5cm}\text{and}
\hspace{0.5cm}\nabla _{W}\,W=0.  \label{campoasint.}
\end{equation*}
The first one is true (see (1.10.9) in \cite{Eberlein-1996}), since it is well known
that the shape operator of a horosphere is the identity. The second
one holds, since the integral curves of an asymptotic vector field are
geodesics.
\end{proof}

\medskip

\begin{lemma}\label{jacobi} 
Let $\gamma $ be a geodesic of $\mathbb{H}^{3}$ and let $J\in 
\mathcal{J}_{\gamma }$ be given by $J(s)=\left. {\frac{d}{dt}}\right\vert
_{0}\gamma _{u_{t}}(s)$, where $t\mapsto u_{t}$ is a smooth curve in $T^{1}\mathbb{H}^{3}$, 
with foot points $c(t)$ \emph{(}in particular, $u_{0}=\dot{\gamma}\left( 0\right)
\bot \dot{c}\left( 0\right) $\emph{)}. If $v_{t}\in T_{\gamma (0)}^{1}%
\mathbb{H}^{3}$ is the asymptotic vector to $u_{t}$ for each $t\in \mathbb{R}
$, then the Jacobi vector field $K$ along $\gamma$ associated with $v_{t}$ satisfies 
\begin{equation*}
K^{\prime }(0)=J(0)+J^{\prime }(0).
\end{equation*}
\end{lemma}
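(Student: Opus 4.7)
The plan is to reduce the identity to a single application of equation (\ref{Ec.Asintotica}) from the preceding lemma. First I translate the hypotheses into initial data for $J$ and $K$. Since $\gamma_{u_t}(0)=c(t)$ and $\dot\gamma_{u_t}(0)=u_t$, interchanging $s$- and $t$-derivatives in $J(s)=\frac{d}{dt}|_0\gamma_{u_t}(s)$ gives
\[
J(0)=\dot c(0)\qquad\text{and}\qquad J'(0)=\left.\tfrac{D}{dt}\right|_{0}u_{t}.
\]
Because every $v_t$ has foot point $\gamma(0)$, the curve $t\mapsto\gamma_{v_t}(0)$ is constant, so $K(0)=0$ and $K'(0)=\left.\tfrac{D}{dt}\right|_{0}v_{t}$. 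The claim therefore reduces to
\[
\left.\tfrac{D}{dt}\right|_{0}v_{t}\;=\;\dot c(0)\;+\;\left.\tfrac{D}{dt}\right|_{0}u_{t}.
\]

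The key observation is that $u_t$ and $v_t$ are the values of a single object at two different points. For each $t$, let $W_t$ be the asymptotic vector field on $\mathbb{H}^{3}$ whose forward integral curves end at $\gamma_{u_t}(\infty)\in\mathbb{H}^{3}(\infty)$. Then by construction $W_t(c(t))=u_t$ and $W_t(\gamma(0))=v_t$. Since $t\mapsto\gamma_{u_t}(\infty)$ is smooth and asymptotic vector fields depend smoothly on their endpoint at infinity, the assignment $(p,t)\mapsto W_t(p)$ is jointly smooth.

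Now I differentiate the identity $u_t=W_t(c(t))$ at $t=0$ via the product-like chain rule for a vector field depending on a parameter (which is immediate by expanding in a parallel frame along $c$):
\[
\left.\tfrac{D}{dt}\right|_{0}u_{t}\;=\;\nabla_{\dot c(0)}W_{0}\;+\;\left.\tfrac{D}{dt}\right|_{0}W_{t}(\gamma(0)).
\]
The first summand I evaluate with the preceding lemma applied to the curve $c$: since $W_0(c(0))=u_0=\dot\gamma(0)$ and $\dot c(0)\perp u_0$ by hypothesis, equation (\ref{Ec.Asintotica}) collapses to $\nabla_{\dot c(0)}W_0=-\dot c(0)$. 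The second summand equals $\left.\tfrac{D}{dt}\right|_{0}v_{t}=K'(0)$. Rearranging yields $K'(0)=\dot c(0)+\left.\tfrac{D}{dt}\right|_{0}u_{t}=J(0)+J'(0)$, as desired.

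The only delicate point will be the smooth dependence of $W_t$ on $t$, equivalently the smooth dependence of the asymptotic vector field on its endpoint in $\mathbb{H}^{3}(\infty)$; this is standard and can be verified, for instance, in the upper half-space model or via the smoothness of the Busemann function. Everything else is routine bookkeeping once the lemma on asymptotic vector fields is in hand.
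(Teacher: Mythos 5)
Your proof is correct and follows essentially the same route as the paper's: the paper introduces the two-parameter field $V(s,t)$ (the unit vector at $c(s)$ asymptotic to $u_t$, which is your $W_t(c(s))$), splits $\left.\frac{D}{dt}\right|_0 V(t,t)$ into the two partial derivatives exactly as in your chain-rule step, and evaluates the first one as $-\dot c(0)$ via equation (\ref{Ec.Asintotica}) using $\dot c(0)\perp u_0$. The only difference is presentational — your explicit flagging of the smooth dependence of the asymptotic field on the point at infinity is a point the paper leaves implicit in ``writing $V$ in coordinates.''
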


\begin{proof}
For each $s,t\in \mathbb{R}$, let $V(s,t)$ be the unique unit vector at $%
c(s)$ that is asymptotic to $u_{t}$. In particular, $V(0,t)=v_{t}$ and $%
V(t,t)=u_{t}$. We compute 
\begin{equation}
\left. {\frac{D}{dt}}\right\vert _{0}u_{t}=\left. {\frac{D}{dt}}\right\vert
_{0}V(t,t)=\left. {\frac{D}{dt}}\right\vert _{0}V(t,0)+\left. {\frac{D}{dt}}%
\right\vert _{0}V(0,t)=\left. {\frac{D}{dt}}\right\vert _{0}V(t,0)+\left. {%
\frac{D}{dt}}\right\vert _{0}v_{t}  \label{der.parc}
\end{equation}%
The second equality follows from the well-known corresponding identity in
the calculus of several variables (writing $V$ in coordinates). The vector
field $V(t,0)$ is an asymptotic vector field along $t\mapsto c(t)$. So,
using that $J$ is orthogonal to $\dot{\gamma}$ (in particular $\left\langle 
\dot{c}\left( 0\right) ,V\left( 0,0\right) \right\rangle =0$) and (\ref%
{Ec.Asintotica}) with $W_{ c\left( t\right) } =V\left( t,0\right) $, we have
that 
\begin{equation}  \label{V(t,0)}
\left. {\frac{D}{dt}}\right\vert _{0}V(t,0)=-\dot{c}(0).
\end{equation}
Finally, since $J^{\prime }(0)=\left. {\frac{D}{dt}}\right\vert _{0}u_{t}$
and $K^{\prime }(0)=\left. {\frac{D}{dt}}\right\vert _{0}v_{t}$, by (\ref%
{der.parc}) and (\ref{V(t,0)}) we obtain $K^{\prime }(0)=J(0)+J^{\prime }(0)$, 
as desired.
\end{proof}

\begin{proof}[Proof of Theorem \ref{SAD}.]
\noindent (a) Suppose that the foliation is semi-nondegenerate. 
Let $[\gamma ]\in \mathcal{M}$ and let $0\neq X\in T_{[\gamma ]}\mathcal{M}$ such
that $\Vert X\Vert _{\times }=0$. We want to see that $\left\Vert
X\right\Vert _{K}>0$. By (a) $\Rightarrow $ (b) in Theorem \ref{adefinite}
we have $\Vert X\Vert _{K}\geq 0$. Suppose now that $\Vert X\Vert _{K}=0$.
Let $J\in \mathcal{J}_{\gamma }$ be the Jacobi vector field associated with 
$X$ via the isomorphism $T_{\gamma }$ given in (\ref{isoT}). Hence, by (\ref
{n}), $\{J(0),J^{\prime }(0)\}$ is linearly dependent and 
$\left\vert J^{\prime }\left( 0\right) \right\vert ^{2}=|J(0)|^{2}$. Since 
$X\neq 0$, we have that $J$ is a stable or an unstable vector field. If $J$
is a stable Jacobi vector field, by Proposition 1.10.7 in \cite{Eberlein-1996}, $J(s)=\left. 
\frac{d}{dt}\right\vert _{0}\gamma _{u_{t}}(s)$ for some smooth curve 
$t\mapsto u_{t}\in T^{1}\mathbb{H}^{3}$ with $u_{0}=\dot{\gamma}(0)$ and 
$u_{t}$ asymptotic for all $t$. Then, 
\begin{equation*}
(d\varphi ^{+})_{[\gamma ]}X=\left. \frac{d}{dt}\right\vert _{0}\varphi
^{+}([\gamma _{u_{t}}])=0,
\end{equation*}
which is a contradiction since $\varphi ^{+}$ is a local diffeomorphism. The case $J$ 
unstable is similar. Therefore, $\mathcal{M}$ is semidefinite.

Conversely, if $\M$ is semidefinite, we have to prove that the foliation is semi-nondegenerate, 
that is, that the backward and forward Gauss maps $\varphi ^{\pm }$ are local diffeomorphisms. 
So, let $[\gamma ]\in \mathcal{M}$ and $0\neq X\in T_{[\gamma ]}\mathcal{M}$. Let $%
J\in \mathcal{J}_{\gamma }$ be the Jacobi vector field associated with $X$
via (\ref{isoT}) and consider a smooth curve $t\mapsto u_{t}\in T^{1}\mathbb{%
H}^{3}$ such that $[\gamma _{u_{t}}]\in \mathcal{M}$ and $J(s)=\left. {\frac{%
d}{dt}}\right\vert _{0}\gamma _{u_{t}}(s)$. Let us prove that $(d\varphi
^{+})_{[\gamma ]}X\neq 0$ (the proof of the corresponding assertion for $%
\varphi ^{-}$ instead of $\varphi $ $^{+}$ is similar). We have to see that
the initial velocity of $t\mapsto \gamma _{u_{t}}(\infty )$ is different
from zero. By the definition of the differentiable structure of $\mathbb{H}%
^{3}(\infty )$ we have that the map that assigns to each $v\in T_{\gamma
(0)}^{1}\mathbb{H}^{3}$ the equivalence class of $\gamma _{v}$ in $\mathbb{H}%
^{3}(\infty )$ is a diffeomorphism. So, we consider the smooth curve $%
t\mapsto v_{t}\in T_{\gamma (0)}^{1}\mathbb{H}^{3}$ such that $\gamma
_{u_{t}}(\infty )=\gamma _{v_{t}}(\infty )$ and we show that $\left. \frac{d%
}{dt}\right\vert _{0}v_{t}\neq 0$. Let $K$ be the Jacobi vector field along $%
\gamma $ given by $K(s)=\left. {\frac{d}{dt}}\right\vert _{0}\gamma
_{v_{t}}(s)$ with initial conditions $K(0)=0$ and $K^{\prime }(0)=\left. {%
\frac{D}{dt}}\right\vert _{0}v_{t}$. By the isomorphism given in (\ref%
{isophi_v}), it suffices to see that $K^{\prime }(0)\neq 0$. Now, by Lemma %
\ref{jacobi}, $K^{\prime }(0)=J(0)+J^{\prime }(0)$. If $\Vert X\Vert
_{\times }\neq 0$, the set $\{J(0),J^{\prime }(0)\}$ is linearly independent
and so $K^{\prime }(0)\neq 0$. Now suppose that $\Vert X\Vert _{\times }=0$.
Since $X\neq 0$ and $\mathcal{M}$ is semidefinite, then 
$\left\Vert X\right\Vert _{K}=|J(0)|^2-|J'(0)|^2 > 0$. By (\ref{n}), the set $\{J(0),J^{\prime
}(0)\}$ is linearly dependent, and $J(0)\neq 0$. Hence, $J^{\prime
}(0)=\lambda J(0)$ with $\lambda \in \mathbb{R}-\{\pm 1\}$. Consequently, $%
K^{\prime }(0)=(1+\lambda )J(0)\neq 0$, as desired.

\medskip

\noindent(b) First, suppose that the foliation is nondegenerate, that is, the 
only eigenvectors of $\nabla V$ are in $\R V$. We want to see that $\|X\|_{\times}\neq 0$ 
for all $0\neq X\in T\M$. Suppose that there exists a nonzero vector $X\in T_{[\gamma]}\M$ such that
$\|X\|_{\times}=0$. Let $J\in \mathcal{J}_{\gamma}$ the Jacobi vector field associated with $X$ via the 
isomorphism $T_{\gamma}$ defined in (\ref{isoT}). By (\ref{Jprima}), $\nabla_{J(0)}V=J'(0)$ and since $X\neq 0$ we obtain that
$J(0)\neq 0$. Now, since 
$$
0=\|X\|_{\times}=\|T_{\gamma}(J)\|_{\times}=\left\langle \dot{\gamma}(0)\times
J(0), J'(0)\right\rangle, 
$$
we have that $J(0)\times J'(0)$ is orthogonal to $\dot{\gamma}(0)=V(\gamma(0))$. Or equivalently, 
$J'(0)$ is a multiple of $J(0)$. Again by (\ref{Jprima}), $J(0)$ is an eigenvector of 
$\nabla V$ orthogonal to $V(\gamma(0))$ (recall that $J\in \mathcal{J}_{\gamma}$), which is a contradiction.

Conversely, let $u\in T_{p}\Hy$ be an eigenvector of $\nabla V$ with eigenvalue $\lambda$. 
Let $c:(-\varepsilon, \varepsilon)\rightarrow \Hy$ be a smooth curve such that $\dot{c}(0)=u-\left\langle u, V(p)\right\rangle V(p) \bot V(p)$. 
So, the Jacobi vector field given by $J(s)=\left. \dfrac{d}{dt}\right\vert _{0}\gamma_{V(c(t))}\left( s\right)$ is in 
$\mathcal{J}_{\gamma}$, where $\gamma=\gamma_{V(p)}$. Since $J(0)=\dot{c}(0)$ and $\nabla _{V(p)}V=0$, 
we have that $\nabla_{J(0)} V=\lambda u$. As $\nabla_{J(0)} V=J'(0)$ by (\ref{Jprima}), 
$T_{\gamma}(J)\in T\M$ satisfies 
\begin{equation*}\label{nulo}
\|T_{\gamma}(J)\|_{\times}=\left\langle \dot{\gamma}(0)\times J(0), J'(0)\right\rangle = 
\left\langle V(p)\times \dot{c}(0), \lambda u\right\rangle=\left\langle V(p)\times u, \lambda u\right\rangle=0.
\end{equation*}
Since $g_{\times}$ induces on $\M$ a definite metric, we have that $T_{\gamma}(J)=0$ and so $\dot{c}(0)=J(0)=0$. Thus, 
$u$ is a multiple of $V(p)$, as desired.
\end{proof}

\medskip

The definition of semi-nondegenerate foliation says that geodesic varying smoothly within the foliation 
do not meet at infinity. The following theorem states that this local condition implies in fact the
global property that geodesics in the foliation do not meet at infinity at
all. In the proof we have to use coordinates in $\mathbb{H}^{3}$.
\medskip

\begin{theorem} 
Let $\mathcal{M}$ be the space of leaves of a
semi-nondegenerate smooth foliation of $\mathbb{H}^{3}$ by oriented geodesics.
Then the forward and backward Gauss maps $\varphi ^{\pm }:\mathcal{M}%
\rightarrow \mathbb{H}^{3}\left( \infty \right) $ are one to one. In
particular, they are diffeomorphisms onto their images.
\end{theorem}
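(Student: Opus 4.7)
I argue by contradiction. Assume $\varphi^+$ is not injective and fix $[\gamma_1] \ne [\gamma_2] \in \mathcal{M}$ with $\varphi^+([\gamma_1]) = \varphi^+([\gamma_2]) = \xi$. As the paper announces, we argue in coordinates: applying an isometry of $\mathbb{H}^3$, we may put $\xi = \infty$ in the upper half-space model. Then $\gamma_1, \gamma_2$ become distinct vertical upward half-lines through points $p_1 \ne p_2 \in \mathbb{R}^2$, and $V = W$ on $\gamma_1 \cup \gamma_2$, where $W = z\,\partial_z$ is the unit asymptotic vector field to $\xi$.

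The first step is an infinitesimal consequence of semi-nondegeneracy. Let $M = \nabla V|_{V^\perp}$ denote the $2 \times 2$ block of $\nabla V$ transverse to a leaf. By Theorem \ref{SAD}(a) and formulas (\ref{n}), a nonzero stable Jacobi field $J = Be^{-s}$ in $T_{[\gamma]}\mathcal{M}$ would produce an $X = T_\gamma(J) \ne 0$ with $\|X\|_\times = 0 = \|X\|_K$, contradicting semidefiniteness; hence $-1 \notin \mathrm{spec}(M)$ along any leaf in $\mathcal{M}$. Combined with $\nabla_V V = 0$ and the identity $\nabla_X W = \langle X, W\rangle W - X$ (equation (\ref{Ec.Asintotica})), one computes that at every point $q \in \gamma_1 \cup \gamma_2$, the linearization of $V - W$ restricted to $V(q)^\perp$ equals $I + M$, and hence is invertible.

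Next comes the global topological step, carried out on a horosphere $H_c = \{z = c\}$ at $\xi$. Let $V^\parallel = V - \langle V, W\rangle W$ be the tangential component of $V$ to the horospheres. By the previous paragraph, $V^\parallel|_{H_c}$ is a smooth tangent vector field on $H_c \cong \mathbb{R}^2$ with non-degenerate isolated zeros at $q_i := \gamma_i \cap H_c$, of definite index $\mathrm{sign}\,\det(I + M_{q_i})$. On the other hand, the local-diffeomorphism property of $\varphi^+$ at $[\gamma_i]$, combined with the chain rule and the transversality $V(q_i) \notin T_{q_i}H_c$, implies that the smooth map $\tilde\Phi \colon H_c \to S^2 = \mathbb{H}^3(\infty)$, $\tilde\Phi(q) = \gamma_{V(q)}(+\infty)$, is a local diffeomorphism at each $q_i$, sending a small disk around $q_i$ diffeomorphically onto a small disk around $\xi$. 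Joining $[\gamma_1]$ to $[\gamma_2]$ by a smooth path in the simply connected surface $\mathcal{M} \cong \mathbb{R}^2$ (Theorem \ref{adefinite}) and tracking how $\tilde\Phi$ winds around $\xi$, I would extract the contradiction via a degree argument applied to $\tilde\Phi$ after compactifying $H_c$ to $S^2$, or by showing directly that two distinct sheets of $\varphi^+$ near $\xi$ force leaves in the corresponding tubular neighborhoods to meet --- contradicting the foliation property.

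The main obstacle I expect is in the compactification/asymptotic step: one must control $V$ and thus $\tilde\Phi$ as $q$ approaches the boundary at infinity of $H_c$ in order to apply a degree or Poincar\'e--Hopf argument. Lemma \ref{distancia}(b), which forces sequences of leaves without cluster points in $\mathcal{M}$ to have $D \to \infty$, should be instrumental. The injectivity of $\varphi^-$ follows by applying the same reasoning to the foliation with the opposite orientation; once one-to-oneness is established, ``diffeomorphism onto the image'' is immediate from the already-known local-diffeomorphism property.
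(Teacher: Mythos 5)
Your proposal stops short of a proof at exactly the point where the real work lies. The infinitesimal step is fine (a stable or unstable Jacobi field tangent to $\mathcal{M}$ has $\Vert X\Vert _{\times }=\Vert X\Vert _{K}=0$, so semidefiniteness excludes $-1$ from the spectrum of $\nabla V|_{V^{\perp }}$ along a leaf asymptotic to $\xi$, and hence $I+M$ is invertible), but the global step is only named, not carried out, and the degree-theoretic route you lean on has two problems beyond the compactification issue you yourself flag. First, even if $V^{\parallel }$ or $\tilde{\Phi}$ extended to the compactification of $H_{c}$, no contradiction follows from index counting alone: two nondegenerate zeros of index $+1$ on $S^{2}$ are perfectly consistent with Poincar\'{e}--Hopf (total index $2$), and two regular preimages of $\xi$ are consistent with a degree-$2$ map; you would need extra information to rule these out, and that information must come from the foliation property (disjointness of leaves), which your degree count does not see. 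Second, the zero set of $V^{\parallel }$ on $H_{c}$ is $\{q:\varphi ^{+}(P(q))=\xi \ \text{or}\ \varphi ^{-}(P(q))=\xi \}$, so there may be further zeros (leaves pointing \emph{away} from $\xi$) that you have not accounted for.

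The alternative you mention in one clause --- that two distinct sheets of $\varphi ^{+}$ over $\xi$ would force leaves to meet --- is in fact the paper's argument, and the content you are missing is how to make ``sheet'' global enough to catch $\gamma _{2}$ itself. The paper chooses a section $S:\mathcal{M}\rightarrow \mathbb{H}^{3}$ (available because $\mathcal{M}\cong \mathbb{R}^{2}$ by Theorem \ref{adefinite}) and a neighborhood $A^{\prime }$ of $S([\gamma _{1}])$ on which $F^{+}$ is a diffeomorphism onto the complement $B_{+}$ of a disk of radius $R$ about $0$ in $\mathbb{R}^{2}\times \{0\}$ while $F^{-}(A^{\prime })$ lies in a disk of radius $\delta $. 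The leaves through $\partial A^{\prime }$ then form a cylinder of vertical semicircles of height at most $\tfrac{1}{2}(R+\delta )$, which separates $\mathbb{H}^{3}$; consequently the leaves through $A^{\prime }$ sweep out the entire horoball $\{2z\geq R+\delta \}$. Since $\gamma _{2}$ is a vertical line, it eventually enters this horoball, hence coincides with a leaf through some $q\in A^{\prime }$ (leaves through a point are unique), and then injectivity of $F^{+}$ on $A^{\prime }$ forces $[\gamma _{2}]=[\gamma _{1}]$. This separation-plus-height-bound step is the heart of the theorem and is absent from your plan; without it (or a genuine substitute controlling $V$ near the boundary at infinity of $H_{c}$), the proposal does not close. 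Note also that the paper's proof never needs your invertibility of $I+M$; it uses only the local-diffeomorphism property of $\varphi ^{\pm }$ and the elementary hyperbolic geometry of the half-space model.
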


\begin{proof} 
Let $P:\mathbb{H}^{3}\rightarrow \mathcal{M}$ be the map
assigning to each point $q$ in the hyperbolic space the oriented geodesic in
the foliation containing $q$, that is, $P\left( q\right) =\left[ \gamma
_{V\left( q\right) }\right] $. This is a fiber bundle with typical fiber $%
\mathbb{R}$. Since $\mathcal{M}$ is diffeomorphic to $\mathbb{R}^{2}$ by
Theorem \ref{adefinite}, there exists a global section $S:\mathcal{M}%
\rightarrow \mathbb{H}^{3}$. Let $F:\mathcal{M}^{\prime }\times \mathbb{R}%
\rightarrow \mathbb{H}^{3}$ be the diffeomorphism given by $F\left(
q,t\right) =\gamma _{V\left( q\right) }\left( t\right) $, where $\mathcal{M}%
^{\prime }=S\left( \mathcal{M}\right) \subset \mathbb{H}^{3}$. Let 
\begin{equation*}
F^{\pm }:\mathcal{M}^{\prime }\rightarrow \mathbb{H}^{3}\left( \infty
\right) \text{,\ \ \ \ \ \ \ \ \ \ \ }F^{\pm }\left( q\right) =\gamma
_{V\left( q\right) }\left( \pm \infty \right) \text{,}
\end{equation*}
which satisfies $F^{\pm }\circ S=\varphi ^{\pm }$. Clearly, it suffices to
prove that $F^{\pm }$ is one to one.

We consider the upper half space model of the hyperbolic space, that is,\\ 
$\left\{ \left( x,y,z\right) \mid z>0\right\}$ with the Riemannian metric 
$ds^{2}=\frac{1}{z^{2}}\left( dx^{2}+dy^{2}+dz^{2}\right) $. Without loss of
generality, we may suppose that $\left[ \gamma _{o}\right] \in \mathcal{M}$,
where $\gamma _{o}\left( t\right) =\left( 0,0,e^{t}\right) $, and that $%
S\left( \left[ \gamma _{o}\right] \right) =\left( 0,0,1\right) $. In this
model, $\mathbb{H}^{3}\left( \infty \right) =\left( \mathbb{R}^{2}\times
\left\{ 0\right\} \right) \cup \left\{ \infty \right\} $, $\varphi ^{+}\left[
\gamma _{o}\right] =\gamma _{o}\left( \infty \right) =\infty $ and $\varphi
^{-}\left[ \gamma _{o}\right] =\gamma _{o}\left( -\infty \right) =0$.

Since $\varphi ^{\pm }$ (or equivalently $F^{\pm }$) are local
diffeomorphisms, there exists a neighborhood $A$ of $\left( 0,0,1\right) $
in $\mathcal{M}^{\prime }$, and neighborhoods $U_{+}$ and $U_{-}$ of $\infty 
$ and $0$ in $\mathbb{H}^{3}\left( \infty \right) $, respectively, such that 
$F^{\pm }:A\rightarrow U_{\pm }$ is a diffeomorphism. Let $B_{+}\subset
U_{+} $ be the complement of a closed disk centered at $0$ of radius $R$ in $%
\mathbb{R}^{2}\times \left\{ 0\right\} $. Let $A^{\prime }\subset A$ and $%
B_{-}\subset U_{-}$ be such that $F^{\pm }:A^{\prime }\rightarrow B_{\pm }$
are diffeomorphisms. Taking, if necessary, a larger $R$, we may suppose that 
$B_{-}$ is contained in the disk of radius $\delta $ (also centered at $0$).

Let us see that $F\left( A^{\prime }\times \mathbb{R}\right) $ contains the
horoball $\left\{ \left( x,y,z\right) \mid 2z\geq R+\delta \right\} $. If 
$\partial A^{\prime }$ is the border of $A^{\prime }$ in $\mathcal{M}^{\prime
}$, then $F\left( \partial A^{\prime }\times \mathbb{R}\right) $ is a
cylinder in $\mathbb{H}^{3}$ separating the space in two connected
components, in such a way that $F\left( A^{\prime }\times \mathbb{R}\right)$
is the component containing the trajectory of $\gamma _{o}$. The assertion
follows from the fact that the cylinder is built up with trajectories of
geodesics in $\mathbb{H}^{3}$ (vertical semicircles with center in $\mathbb{R
}^{2}\times \left\{ 0\right\} $) whose $z$-component is smaller than $\frac{1
}{2}\left( R+\delta \right) $.

Finally, given $\left[ \sigma \right] \in \mathcal{M}$ such that $\sigma
\left( \infty \right) =\gamma _{o}\left( \infty \right) $, we want to see
that $\left[ \sigma \right] =\left[ \gamma _{o}\right] $. The geodesic 
$\sigma $ must have the form $\sigma \left( t\right) =\left(
x_{o},y_{o},z_{o}e^{t}\right) $ for some real numbers $x_{o},y_{o},z_{o}$,
with $z_{o}>0$. For $t$ large enough, $\sigma \left( t\right) $ is in the
horoball. In particular, there exists $t_{1}$ such that $\sigma \left(
t_{1}\right) =F\left( q,s\right) $ for some $\left( q,s\right) \in A^{\prime
}\times \mathbb{R}$. Hence $\sigma \left( t_{1}\right) =\gamma _{V\left(
q\right) }\left( s\right) $. Now, since for each point of $\mathbb{H}^{3}$
passes only one geodesic in $\mathcal{M}$, we have that $\left[ \sigma 
\right] =\left[ \gamma _{V\left( q\right) }\right] $. Consequently, $\gamma
_{V\left( q\right) }\left( \infty \right) =\infty $ and so $q=\left(
0,0,1\right) $, since $F^{+}$ is one to one on $A^{\prime }$. Therefore 
$\left[ \sigma \right] =\left[ \gamma _{o}\right] $. The injectivity of 
$\varphi ^{-}$ is verified in a similar way.
\end{proof}

\bibliographystyle{amsplain}
\bibliography{mybib}

\end{document}